\documentclass[graybox]{SNmult}

\usepackage{type1cm}        % activate if the above 3 fonts are
\usepackage{makeidx}         % allows index generation
\usepackage{graphicx}        % standard LaTeX graphics tool
\usepackage{multicol}        % used for the two-column index
\usepackage[bottom]{footmisc}% places footnotes at page bottom

\usepackage{newtxtext}       % 
\usepackage[varvw]{newtxmath}       % selects Times Roman as basic font

\makeindex             % used for the subject index
\usepackage{hyperref}
\usepackage{dsfont, cancel}
\usepackage{enumitem}

\usepackage{booktabs} % For nice-looking tables

\usepackage[
backend=biber,
style=numeric,
giveninits=true,
sorting=nyt,
maxnames=4,
abbreviate=true
]{biblatex}
\AtEveryBibitem{
    \clearfield{urldate}
    \clearfield{keywords}
    \clearfield{urlyear}
    \clearfield{urlmonth}
    \clearfield{url}
    \clearfield{issn}
    \clearfield{isbn}
    \clearfield{pages}
    \clearfield{doi}
}

    \newtheorem{assump}[theorem]{Assumption}

\let\pa\partial

\newcommand{\wt}{\widetilde}

\renewcommand{\b}{\mathrm{b}}
\renewcommand{\d}{\mathrm{d}}
\renewcommand{\D}{\mathrm{D}}

\newcommand{\dt}{\,\mathrm{d}t}
\newcommand{\ds}{\,\mathrm{d}s}
\newcommand{\dx}{\,\mathrm{d}\mathbf{x}}

\newcommand{\ba}{\mathbf{a}}
\newcommand{\bb}{\mathbf{b}}
\newcommand{\bc}{\mathbf{c}}
\newcommand{\bd}{\mathbf{d}}
\newcommand{\be}{\mathbf{e}}

\newcommand{\bh}{\mathbf{h}}
\newcommand{\bm}{\mathbf{m}}

\newcommand{\bu}{\mathbf{u}}
\newcommand{\bv}{\mathbf{v}}
\newcommand{\bw}{\mathbf{w}}
\newcommand{\bx}{\mathbf{x}}

\newcommand{\bzero}{\mathbf{0}}

\newcommand{\bB}{\mathbf{B}}
\newcommand{\bD}{\mathbf{D}}
\newcommand{\bE}{\mathbf{E}}
\newcommand{\bF}{\mathbf{F}}
\newcommand{\bH}{\mathbf{H}}
\newcommand{\bJ}{\mathbf{J}}
\newcommand{\bL}{\mathbf{L}}
\newcommand{\bM}{\mathbf{M}}
\newcommand{\bN}{\mathbf{N}}

\newcommand{\bR}{\mathbf{R}}
\newcommand{\bU}{\mathbf{U}}
\newcommand{\bV}{\mathbf{V}}
\newcommand{\bW}{\mathbf{W}}

\newcommand{\obU}{\overline{\mathbf{U}}}

\newcommand{\obM}{\overline{\mathbf{M}}}
\newcommand{\obH}{\overline{\mathbf{H}}}
\newcommand{\oP}{\overline{P}}
\newcommand{\oPhi}{\overline{\Phi}}
\newcommand{\oPsi}{\overline{\Psi}}

\newcommand{\cB}{\mathcal{B}}

\newcommand{\sD}{\mathscr{D}}
\newcommand{\sE}{\mathscr{E}}
\newcommand{\sG}{\mathscr{G}}
\newcommand{\sM}{\mathscr{M}}
\newcommand{\sR}{\mathscr{R}}

\newcommand{\bbI}{\mathbb{I}}
\newcommand{\bbP}{\mathbb{P}}
\newcommand{\bbR}{\mathbb{R}}
\newcommand{\bbU}{\mathbb{U}}
\newcommand{\bbX}{\mathbb{X}}

\newcommand{\dL}{\mathds{L}}

\newcommand{\cT}{\mathcal{T}}

\newcommand{\cS}{\mathcal{S}}
\newcommand{\cR}{\mathcal{R}}

\newcommand{\sfe}{\mathsf{e}}
\newcommand{\sfE}{\mathsf{E}}
\newcommand{\sfU}{\mathsf{U}}
\newcommand{\osfU}{\overline{\mathsf{U}}}
\newcommand{\sfu}{\mathsf{u}}

\renewcommand{\div}{\operatorname{div}}
\newcommand{\curl}{\operatorname{curl}}
\newcommand{\bcurl}{\operatorname{\mathbf{curl}}}

\newcommand{\conv}[1]{(#1 \cdot \nabla) #1}
\newcommand{\Conv}[2]{(#1 \cdot \nabla) #2}

\newcommand{\bn}{\mathbf{n}}

\begin{document}
% \motto{On the occasion of G. Savar\'e's 60th birthday.}
\title*{Ferrofluids: Modeling and Approximation}
% Use \titlerunning{Short Title} for an abbreviated version of
% your contribution title if the original one is too long
\author{Gonzalo A. Benavides, Ricardo H. Nochetto, and Mansur Shakipov}
% Use \authorrunning{Short Title} for an abbreviated version of
% your contribution title if the original one is too long
\institute{%Name of First Author \at Name, Address of Institute, \email{name@email.address}
Gonzalo A. Benavides\orcidID{0000-0002-6072-0427}, Mansur Shakipov\orcidID{0000-0003-4837-6747} \at Department of Mathematics, University of Maryland, College Park, MD 20742, USA, \email{gonzalob@umd.edu}, \email{shakipov@umd.edu}
\and Ricardo H. Nochetto\orcidID{0009-0002-8895-1144} \at Department of Mathematics and Institute for Physical Science and Technology, University of Maryland, College Park, MD 20742, USA, \email{rhn@umd.edu}
% \and Mansur Shakipov \at Department of Mathematics, University of Maryland, College Park, MD 20742, USA, \email{shakipov@umd.edu}
}
%
% Use the package "url.sty" to avoid
% problems with special characters
% used in your e-mail or web address
%
\maketitle
\abstract{Starting from Maxwell's and linear momentum balance equations, we derive a ferrofluid model using the generalized Onsager's principle. 
Guided by a discrete perturbation estimate, we design and analyze families of Galerkin schemes that converge to sufficiently regular solutions and derive error estimates. Finally, we numerically explore the model with our proposed method.
\keywords{Ferrofluids $\cdot$ Onsager's principle $\cdot$ Galerkin method $\cdot$ Error estimate}}

\section{Introduction}\label{sec:intro}
    Ferrofluid is a colloidal liquid wherein ferromagnetic particles are suspended in a carrier liquid such as water \cite{Rosensweig2013}.
    By design, it behaves as a liquid and responds to an applied magnetic field.
    This material has broad applications in the design of loudspeakers, cell separation, and audio visualization, among other areas \cite{ferrofluids-review, SocoliucEtAl2022}.
    In this work, we consider the following reduced ferrofluid system:
    \begin{equation} \label{eq:ff}
        \begin{cases}
            \D_t \bu - \nu \Delta \bu + \nabla p = \mu_0 (\bm \cdot \nabla) \bh, \qquad \D_t \bm = - \frac{1}{\tau} (\bm - \chi \bh), \\
            \div \bu = 0, \qquad \bcurl \bh = \bzero, \qquad \bb = \mu_0(\bm + \bh), \qquad  \div \bb = 0.
        \end{cases}
    \end{equation}
    Here, $\D_t := \pa_t + (\bu \cdot \nabla)$ is the material derivative, $(\bu, p)$ is the velocity-pressure pair of a fluid flow, $(\bm, \bh, \bb)$ are, respectively, the magnetization, the magnetic H-field, and the induction B-field. The constants $(\nu, \mu_0, \tau, \chi)$ are, respectively, the kinematic fluid viscosity, vacuum permeability, relaxation time, and magnetic susceptibility. System \eqref{eq:ff} is a simplification of the full Rosensweig system \cite{Rosensweig2013} in that it ignores the effect of the angular momentum \cite[\S1.2]{NochettoTrivisaWeber2019}, whilst retaining the key terms like the Kelvin force $\mu_0(\bm \cdot \nabla) \bh$ and the convective term $(\bu \cdot \nabla)\bm$ in the magnetization equation.
    
    System \eqref{eq:ff} is supplemented with initial conditions $\bu(0) = \bu_0$, $\bm(0) = \bm_0$, $\bh(0) = \bh_0$, boundary conditions $\bu = \bzero$, $(\bm + \bh) \cdot \bn_{\pa \Omega} = \bh_a \cdot \bn_{\pa \Omega}$, and the uniqueness condition $\int_\Omega p \dx = 0$.
    A solution $(\bu, p, \bm, \bh)$ of \eqref{eq:ff} formally obeys the following energy law:
    \begin{equation} \label{eq:ff-energy}
        \frac{\d}{\dt} \left[\frac12 \|\bu\|^2 + \frac{\mu_0}{2 \chi} \|\bm\|^2 + \frac{\mu_0}{2} \|\bh\|^2 \right] + \nu \|\nabla \bu\|^2 + \frac{\mu_0}{\tau \chi} \|\bm - \chi \bh\|^2 = \mu_0 \int_\Omega \pa_t \bh_a \cdot \bh \dx,
    \end{equation}
    where $\|\cdot\|$ denotes the $L^2(\Omega)$-norm.
    Equality \eqref{eq:ff-energy} is obtained by testing the momentum equation with $\bu$ and the magnetization equation with $\frac{\mu_0}{\chi}(\bm - \chi \bh)$ \cite[\S3.1]{NochettoTrivisaWeber2019}.
    
    The analysis of the Rosensweig model \eqref{eq:ff} is notoriously difficult, even in the reduced case. The primary reason for that is the fact that the energy law \eqref{eq:ff-energy} only ensures control of $\bm$ and $\bh$ in $L^\infty(0, T; \bL^2(\Omega))$, which is insufficient to make sense of or control the Kelvin force $\mu_0 (\bm \cdot \nabla) \bh$ as is. This is the reason why, in \cite{NochettoTrivisaWeber2019}, to establish the existence of weak solutions, the authors had to rewrite the Kelvin force term and resort to the notion of renormalized solutions. This is also reflected in the numerical analysis literature---convergence results are only available either for regularized solutions (where $- \sigma \Delta \bm$ is added to the magnetization equation for small $\sigma > 0$) \cite{DongHuangHuangTang2025, KeramHuangHe2026}, or assuming that $\bh$ is given (i.e.~it is not an unknown) \cite{NochettoSalgadoTomas2016, NochettoSalgadoTomas2016-2, MaoSun2025}.

    The goal of this work is twofold:
    \begin{enumerate}
        \item We derive the ferrofluid system \eqref{eq:ff} using the generalized Onsager's principle; this is the content of section \ref{sec:derivation}. This gives rise to the energy law \eqref{eq:ff-energy}.
        \item We introduce two potentials to represent the $\bcurl$-free $\bh$ and the $\div$-free $\bb$ fields to design and analyze certain families of Galerkin schemes that converge to sufficiently regular solutions of \eqref{eq:ff} in section \ref{sec:discretization}. This yields error estimates.
    \end{enumerate}
    We conclude this work with numerical experiments in section \ref{sec:experiment}.

\section{Derivation of the ferrofluid system} \label{sec:derivation}
    Let $\Omega \subseteq \bbR^3$ be an open, bounded, and simply-connected set. The goal of this section is to derive the ferrofluid equations \eqref{eq:ff} using Onsager's principle \cite{Onsager1931} starting from Maxwell's and the balance of linear momentum equations. 

    \subsection{Onsager's principle, energy law, and constraints} \label{sec:onsager}
        Onsager's principle requires postulating a general form of evolution PDEs, and defining a priori an \textit{energy} $\sE$ and a \textit{dissipation functional} $\sD$ \cite{Doi2011, Doi2015}.
        To illustrate this idea, for state variables $\bv_j$ of the underlying thermodynamic system, we define the energy
        \[
        \sE = \sum_j \alpha_j \|\bv_j\|^2, \qquad \alpha_j > 0.
        \]
        We postulate the following evolution PDEs:
        \begin{equation} \label{eq:ev-eq}
            \pa_t \bv_j = \bF_j^d + \bF_j^r + \bF_j^{\mathrm{ext}}.
        \end{equation}
        where $\bF_j^d$ are called the \textit{dissipative} quantities, $\bF^r_j$ are called the \textit{reversible} quantities, and $\bF_j^{\mathrm{ext}}$ account for environmental effects. The former must contribute to the rate of change of the energy via the dissipation functional $\sD$ given, e.g., as
        \[
        \sD = \sum_j \beta_j \|\bF_j^d\|^2, \qquad \beta_j > 0.
        \]
        The reversible quantities $\bF_j^r$, on the other hand, must not contribute to the rate of change of the energy and, in principle, may admit multiple different expressions, so a judicious choice needs to be made.
        
        Onsager's principle allows deriving explicit expressions for $\bF_j^d$ and $\bF_j^r$ that yield the following energy law: $\frac{\d}{\dt} \sE + 2 \sD = 2 \sum_j \alpha_j \big(\bF_j^{\mathrm{ext}}, \bv_j \big)$. To see this, let us first compute $\frac{\d}{\dt} \sE$ using \eqref{eq:ev-eq}:
        \[
        \frac{\d}{\dt} \sE = \sum_j 2 \alpha_j \big( \pa_t \bv_j, \bv_j \big) = \sum_j 2 \alpha_j \big(\bF_j^d + \bF_j^r + \bF_j^{\mathrm{ext}}, \bv_j \big).
        \]
        Here, $\big(\cdot, \cdot \big)$ denote the $L^2(\Omega)$-inner product. Onsager's principle dictates that $\bF_j^d$ are stationary points of the \textit{Rayleghian} $\sR := \frac{\d}{\dt} \sE + \sD$, namely
        \[
        \delta_{\bF_j^d} \sR = 0 \implies 2 \alpha_j \bv_j + 2 \beta_j \bF_j^d = 0,
        \]
        whence
        \[
        \bF_j^d = - \frac{\alpha_j}{\beta_j} \bv_j, \quad \text{and} \quad \sD = \sum_j \frac{\alpha_j^2}{\beta_j} \|\bv_j\|^2.
        \]
        On the other hand, the reversible terms must vanish so as not to affect the rate of change $\frac{\d}{\dt} \sE$ of energy:
        \[
        \sum_{j \in J} \alpha_j \big( \bF_j^r, \bv_j \big) = 0.
        \]
        These two relations yield the claimed energy law:
        \begin{equation} \label{eq:abst-en-law}
            \frac{\d}{\dt} \sE = \sum_j 2 \alpha_j \big(\bF_j^d, \bv_j \big) + 2 \alpha_j \big(\bF_j^{\mathrm{ext}}, \bv_j \big) = - 2 \sD + 2 \sum_j \alpha_j \big(\bF_j^{\mathrm{ext}}, \bv_j \big).
        \end{equation}
        Identity \eqref{eq:abst-en-law} corresponds to an energy law for an open system, which sees external data. If the system is closed instead, i.e., $\bF_j^{\mathrm{ext}} = 0$, then we obtain the more familiar relation $\frac{\d}{\dt} \sE = - 2 \sD$.
        
        Recall that the ferrofluid system \eqref{eq:ff} includes three constraints: $\div \bu = 0$, $\bcurl \bh = \bzero$, and $\div \bb = 0$.
        In order to incorporate them into Onsager's formalism, we need to artificially introduce evolution equations for the constraints' Lagrange multipliers. This is because each state variable $\bu$, $p$, $\bm$, $\bh$ and $\bb$, must have its own evolution equation \cite[Section 3]{Doi2011}. For instance, we shall replace $\div \bu = 0$ in \eqref{eq:ff} with $\varepsilon \D_t p + \div \bu = 0$ for $\varepsilon > 0$, and then formally take $\varepsilon \downarrow 0$ once the system is derived.
        For the remaining constraints, instead of introducing artificial transience, we shall consider Maxwell's equations and resort to an asymptotic approximation.
    
    \subsection{Linear momentum balance and Maxwell's equations}
        We postulate the momentum balance and relaxed continuity equations read as follows:
        \begin{equation*} \label{eq:ns-art-compr}
            \D_t \bu = \bF_\bu^r + \div \bF_\bu^d, \qquad \varepsilon \D_t p = \div \bF_p^r.
        \end{equation*}
        We do not include dissipative quantities in the relaxed continuity equation because the latter should not contribute to the energy.
        The macroscopic version of Maxwell's equations \cite{Monk2003} reads:
        \begin{equation*} \label{eq:maxwell-gen}
            \pa_t \bd = \bJ_f + \bcurl \bh, \qquad \div \bd = \rho_f, \qquad \pa_t \bb = - \varepsilon_0^{-1} \bcurl \bd, \qquad \div \bb = 0,
        \end{equation*}
        where $\bd$ is the electric displacement field, $\rho_f$ is the free electric charge density, $\bJ_f$ is the free current density, and $\varepsilon_0 \approx 8.854 \times 10^{-12} \frac{\mathrm{farad}}{\mathrm{meter}}$  is the vacuum permittivity. For this work, we shall consider no electric charge and no free current, thereby leading to
        \begin{equation*} \label{eq:maxwell}
            \pa_t \bd = \bcurl \bh, \qquad \div \bd = 0, \qquad \pa_t \bb = - \varepsilon_0^{-1} \bcurl \bd, \qquad \div \bb = 0,
        \end{equation*}
        To incorporate the constraints $\div \bb = 0$ and $\div \bd = 0$, we introduce two Lagrange multipliers $\lambda_\bb, \lambda_\bd$, and one artificial evolution equation for each of them, thereby yielding
        \begin{equation*}
        \begin{cases}
            \pa_t \bd = \bcurl \bh - \nabla \lambda_\bd, \qquad \varepsilon \pa_t \lambda_\bd = - \div \bd, \\
            \pa_t \bb = - \varepsilon_0^{-1} \bcurl \bd + \nabla \lambda_\bb, \qquad \varepsilon \pa_t \lambda_\bb = - \div \bb.
        \end{cases}
        \end{equation*}
        Lastly, we assume that magnetization is transported by the flow and has no reversible quantities, hence
        \begin{equation*}
            \D_t \bm = \bF_\bm^d.
        \end{equation*}
        To close this system, we add the constitutive equation
        \begin{equation*}
            \bb = \mu_0(\bm + \bh).
        \end{equation*}
        The full postulated ferrofluid system then reads:
        \begin{equation} \label{eq:ff-gen-1}
            \begin{cases}
                \D_t \bu = \bF_\bu^r + \div \bF_\bu^d, \qquad \varepsilon \D_t p = \div \bF_p^r, \qquad \D_t \bm = \bF_\bm^d \\
                \pa_t \bd = \bcurl \bh - \nabla \lambda_\bd, \qquad \varepsilon \pa_t \lambda_\bd = - \div \bd, \qquad \bb = \mu_0(\bm + \bh) \\
                \pa_t \bb = - \varepsilon_0^{-1} \bcurl \bd + \nabla \lambda_\bb, \qquad \varepsilon \pa_t \lambda_\bb = - \div \bb.
            \end{cases}
        \end{equation}
        We assume the following boundary conditions: $\bu = \bzero$, and $(\bm + \bh) \cdot \bn_{\pa \Omega} = \bh_a \cdot \bn_{\pa \Omega}$, where $\bh_a$ is an applied magnetic field.
        It is useful to define a harmonic extension of $\bh_a \cdot \bn_{\pa \Omega}$ into the whole domain $\Omega$ as follows: $\bh_a := \nabla \phi_a$, where $\phi_a$ satisfies
        \begin{equation} \label{eq:harm-ext}
            -\Delta \phi_a = 0 \text{ in } \Omega, \qquad \nabla \phi_a \cdot \bn_{\pa \Omega} = \bh_a \cdot \bn_{\pa \Omega} \text{ on } \pa \Omega, \qquad 
            % \textstyle\int_\Omega \phi_a \dx
            (\phi_a,1)= 0.
        \end{equation}
        One easily verifies that $\div \bh_a = \div \nabla \phi_a = 0$ and $\bcurl \bh_a = \bcurl \nabla \phi_a = \bzero$.
        
        We need to determine the dissipative quantities $\bF_\bu^d, \bF_\bm^d$, as well as the reversible quantities $\bF_\bu^r, \bF_p^r$. However, before we proceed with that calculation, we shall first simplify \eqref{eq:ff-gen-1}.
        
    \subsection{Eliminating Lagrange multipliers and displacement field}
        Assume that the initial condition for $\bd$ is divergence-free.
        Moreover, under the premise that electric effects are negligible compared to magnetic effects, we set $\bd \cdot \bn_{\pa\Omega} = 0$ on $\partial\Omega$.
        We want to show that $\lambda_\bd = 0$. Let us focus on the two equations:
        \begin{equation}\label{eq:lambdad-two-eqs}
            \pa_t \bd = \bcurl \bh - \nabla \lambda_\bd, \qquad \varepsilon \pa_t \lambda_\bd = - \div \bd.
        \end{equation}
        Taking the divergence of the first equation above and using $\div \bd = - \varepsilon \pa_t \lambda_\bd$, we obtain the wave equation for $\lambda_\bd$:
        \[
        \varepsilon \pa_{tt} \lambda_\bd = \Delta \lambda_\bd.
        \]
        Since $\div\bd(0)=0$, the Lagrange multiplier $\lambda_\bd$ should satisfy $\lambda_\bd(0) = 0$ and $\pa_t \lambda_\bd(0) = \div \bd(0) = 0$.
        Next, since the boundary flux of the first equation in \eqref{eq:lambdad-two-eqs} should be unaffected by $\lambda_\bd$, 
        we assume that $\bn_{\pa \Omega} \cdot \nabla \lambda_\bd = 0$ on $\pa \Omega$.
        The uniqueness of the wave equation allows us to deduce that $\lambda_\bd \equiv 0$; this implies $\div \bd \equiv 0$.
        Proceeding similarly, we obtain $\lambda_\bb \equiv 0$ as well.
        
        Next, we shall use an asymptotic approximation for $\bd$.
        Using $\varepsilon_0 \pa_t \bb \approx \bzero$ because $\varepsilon_0 \approx 8.854 \times 10^{-12}$, we arrive at the simplification $\bcurl \bd = \bzero$, from which we further infer that $\bd$ is harmonic. Using that $\Omega$ is simply connected and $\bd \cdot \bn_{\pa \Omega} = 0$ on $\pa \Omega$, we deduce that $\bd \equiv \bzero$. From \eqref{eq:ff-gen-1}, we therefore arrive at the simplified system:
        \begin{equation} \label{eq:ff-gen}
            \begin{cases}
                \D_t \bu = \bF_\bu^r + \div \bF_\bu^d, \quad \varepsilon \D_t p = \div \bF_p^r, \quad \D_t \bm = \bF_\bm^d, \\
                \bb = \mu_0(\bm + \bh), \quad \curl \bh = \bzero, \quad
                \div \bb = 0.
            \end{cases}
        \end{equation}

    \subsection{Energy, dissipation functional and Rayleighian}
        We consider the following time-dependent energy $\sE_\varepsilon$, dissipation functional $\sD$, and Rayleighian $\sR_\varepsilon$ functionals:
        \begin{equation} \label{eq:energies}
            \begin{split}
                \sE_\varepsilon & := \frac{1}{2} \|\bu\|^2 + \frac{\varepsilon}{2} \|p\|^2 + \frac{\mu_0}{2 \chi} \|\bm\|^2 + \frac{\mu_0}{2} \|\bh\|^2, \\
                \sD & := \frac{1}{2 \nu} \|\bF^d_\bu\|^2 + \frac{\tau \mu_0}{2 \chi} \|\bF^d_\bm\|^2, \quad \sR_\varepsilon := \frac{\d}{\dt} \sE_\varepsilon + \sD.
            \end{split}
        \end{equation}
        The rate of change of $\sE_\varepsilon$ reads:
        \begin{equation} \label{eq:pat-sE-eps}
            \begin{split}
                \frac{\d}{\dt} \sE_\varepsilon & = \big(\pa_t \bu, \bu \big) + \varepsilon \big(\pa_t p, p \big) + \frac{\mu_0}{\chi} \big(\pa_t \bm, \bm \big) + \mu_0 \big(\pa_t \bh, \bh \big).
            \end{split}
        \end{equation}
        Note that we have explicit expressions for $\pa_t \bu, \pa_t p, \pa_t \bm$ in \eqref{eq:ff-gen}, while the evolution equation for $\bh$ is implicit: using $\bb = \mu_0(\bm + \bh)$, we deduce that
        \begin{equation} \label{eq:pat-bh}
            \pa_t \bh = \mu_0^{-1} \pa_t \bb - \pa_t \bm = \mu_0^{-1} \pa_t \bb - \bF_\bm^d + (\bu \cdot \nabla) \bm.
        \end{equation}
        It turns out that the only data term $\bF^{\mathrm{ext}}$ in our system is hidden in $\mu_0^{-1} \pa_t \bb$ in \eqref{eq:pat-bh}; compare with \eqref{eq:ev-eq}. Indeed, upon testing \eqref{eq:pat-bh} with $\nabla \wt \phi$ for a scalar potential $\wt \phi$, we find
        \begin{equation} \label{eq:pat-bb}
            \begin{split}
                \mu_0^{-1} \big(\pa_t \bb, \nabla \wt \phi \big) & = \big(\pa_t (\bm + \bh), \nabla \wt \phi \big) = \big(\pa_t \bh_a, \nabla \wt \phi \big),
            \end{split}
        \end{equation}
        where we added and subtracted $\big(\pa_t \bh_a, \nabla \wt \phi \big)$, and used $\div (\bm + \bh - \bh_a) = 0$ and $(\bm + \bh - \bh_a) \cdot \bn_{\pa_\Omega} = 0$ on $\pa \Omega$.
        Since $\bcurl \bh = \bzero$ and $\Omega$ is simply connected, we infer that $\bh = \nabla \phi$ for some scalar-valued potential $\phi$, whence
        \begin{equation} \label{eq:pat-bh-2}
            \mu_0 \big(\pa_t \bh, \bh \big) = \big(\pa_t \bh_a, \bh \big) + \big(-\bF_\bm^d + (\bu \cdot \nabla) \bm, \bh \big).
        \end{equation}
        Therefore, using \eqref{eq:ff-gen} for $\pa_t \bu, \pa_t p, \pa_t \bm$ and \eqref{eq:pat-bh-2} for $\pa_t \bh$, we rewrite \eqref{eq:pat-sE-eps} as
        \begin{equation} \label{eq:dt-cE-eps}
            \begin{split}
                \frac{\d}{\dt} \sE_\varepsilon & = \big(\bF_\bu^r - \conv \bu + \div \bF_\bu^d, \bu \big) + \big(\div \bF_p^r - \varepsilon \bu \cdot \nabla p, p \big) \\
                & + \mu_0 \chi^{-1} \big(\bF_\bm^d - \Conv{\bu}{\bm}, \bm \big) + \mu_0 \big(- \bF_\bm^d + \Conv{\bu}{\bm}, \bh \big) + \mu_0  \big(\pa_t \bh_a, \bh \big),
            \end{split}
        \end{equation}
        where the last data term is neither dissipative nor reversible and accounts for the system not being closed, as observed in \eqref{eq:abst-en-law}.
        As per \eqref{eq:energies}, the expression for the Rayleighian $\sR = \frac{\d}{\dt} \sE_\varepsilon + \sD$ thus reads
        \begin{multline} \label{eq:cR-eps}
            \sR = \big(\bF_\bu^r - \conv \bu + \div \bF_\bu^d, \bu \big) + \big(\div \bF_p^r - \varepsilon \bu \cdot \nabla p, p \big) + \frac{\mu_0}{\chi} \big(\bF_\bm^d - \Conv{\bu}{\bm}, \bm \big) \\
            \quad + \mu_0 \big(- \bF_\bm^d + \Conv{\bu}{\bm}, \bh \big) + \mu_0 \big(\pa_t \bh_a, \bh \big) + \frac{1}{2 \nu} \|\bF^d_\bu\|^2 + \frac{\tau \mu_0}{2 \chi} \|\bF^d_\bm\|^2.
        \end{multline}
        We are now in a position to determine dissipative quantities $\bF_\bu^d$ and $\bF_\bm^d$, and reversible quantities $\bF_\bu^r$ and $\bF_p^r$.
        
    \subsection{Dissipative and reversible quantities}
        As explained in section \ref{sec:onsager}, the dissipative quantities are the stationary points of the Rayleighian $\sR_\varepsilon$ given in \eqref{eq:cR-eps}.
        If $\bB : \Omega \to \bbR^{3 \times 3}$ is an arbitrary symmetric tensor-valued field, then $\bF_\bu^d$ is determined by
        \begin{equation*}
            0 = \langle \delta_{\bF^d_\bu} \sR_\varepsilon, \bB \rangle = \big(-\nabla \bu + \nu^{-1} \bF^d_\bu, \bB \big),
        \end{equation*}
        whence $\bF^d_\bu = \nu D \bu$, where $D := \frac{1}{2} (\nabla + \nabla^T)$ denotes the symmetric gradient.
        In turn, for $\bF_\bm^d$ we have
        \begin{equation*}
            0 = \delta_{\bF^d_\bm} \sR_\varepsilon = \mu_0 \chi^{-1} \bm - \mu_0 \bh + \tau \mu_0 \chi^{-1} \bF^d_\bm \quad \implies \quad \bF^d_\bm = - \tau^{-1} (\bm - \chi \bh).
        \end{equation*}
        Altogether, we obtain the dissipative quantities:
        \begin{equation} \label{eq:dissip-quant}
            \bF^d_\bu = \nu D \bu, \qquad \bF^d_\bm = -\tau^{-1} (\bm - \chi \bh).
        \end{equation}
        We now determine the reversible quantities $\bF^r_\bu$, and $\bF_p^r$. We first recall the rate of change of the energy from \eqref{eq:dt-cE-eps}:
        \begin{equation} \label{eq:cE}
            \begin{split}
                \frac{\d}{\dt} \sE_\varepsilon & = \big(\bF_\bu^r - \conv \bu + \div \bF_\bu^d, \bu \big) + \big(\div \bF_p^r - \varepsilon \bu \cdot \nabla p, p \big) \\
                & \quad + \mu_0 \chi^{-1} \big( \bF_\bm^d - (\bu \cdot \nabla) \bm, \bm \big) + \mu_0 \big(- \bF_\bm^d + \Conv{\bu}{\bm}, \bh \big) + \mu_0 \big(\pa_t \bh_a, \bh \big).
            \end{split}
        \end{equation}
        We note that, since $\bF_\bu^d = \nu D \bu$ and $\bF_\bm^d = - \tau^{-1} (\bm - \chi \bh)$ from \eqref{eq:dissip-quant}, we infer that terms 
        \begin{equation*} \label{eq:dissip-terms}
            \big(\div \bF^d_\bu, \bu \big) = -\nu \|D \bu\|^2, \quad \frac{\mu_0}{\chi} \big(\bF^d_m, \bm \big) - \mu_0 \big(\bF^d_m, \bh \big) = -\frac{\mu_0}{\tau \chi} \|\bm - \chi \bh\|^2
        \end{equation*}
        are dissipative. Moreover, guided by the incompressibility condition on $\bu$, we choose $\bF_p^r = - \bu$. Thus, for the reversible term $\bF_\bu^r$ not to affect the rate of change $\frac{\d}{\dt} \sE_\varepsilon$ of energy $\sE_\varepsilon$ given in \eqref{eq:cE}, we require that
        \begin{equation} \label{eq:bF_bu^r}
            \big(\bF_\bu^r - \conv \bu, \bu \big) + \big(- \div \bu - \varepsilon \bu \cdot \nabla p, p \big) - \mu_0 \chi^{-1} \big(\Conv{\bu}{\bm}, \bm - \chi \bh \big) = 0.
        \end{equation}
        To get an expression for $\bF_\bu^r$, the goal now is to rewrite the terms in \eqref{eq:bF_bu^r} so that the test function is $\bu$. Of course, there could be many ways to do so, but we shall proceed with one such way. 
        
        On the one hand, since $\int_\Omega (\bu \cdot \nabla) \bu \cdot \bu \dx = - \int_\Omega (\bu \cdot \nabla) \bu \cdot \bu \dx - \int_\Omega |\bu|^2 \div \bu \dx$ by integration by parts, the first term in \eqref{eq:bF_bu^r} can be rewritten as $\big(\bF_\bu^r + \frac12 \bu \div \bu, \bu \big)$.
        On the other hand, the second term in \eqref{eq:bF_bu^r} can be rewritten as follows:
        \[
        \big(- \div \bu - \varepsilon \bu \cdot \nabla p, p \big) = \big(\bu, \nabla p \big) + \varepsilon/2 \big(\div \bu, p^2 \big) = \big(\nabla [p - \varepsilon p^2/2], \bu \big).
        \]
        Finally, the third term in \eqref{eq:bF_bu^r} simplifies to
        \[
        \big(\Conv{\bu}{\bm}, \bm \big) = -\frac{1}{2} \big(|\bm|^2, \div \bu \big) = \frac12 \left( \nabla |\bm|^2 , \bu \right),
        \]
        and
        % \[
        \begin{multline}
            \big(\Conv{\bu}{\bm}, \bh \big) = - \big((\bu \cdot \nabla) \bh, \bm \big) - \big(\bm \cdot \bh, \div \bu \big)\\
            = - \big([\nabla^T \bh] \bm, \bu \big) + \big(\nabla(\bm \cdot \bh), \bu \big)
            = - \big([\nabla^T \bh] \bm, \bu \big) + \chi^{-1} \big(\nabla(\bm \cdot \chi \bh), \bu \big).
        \end{multline}
        % \]multline
        Altogether, we deduce that \eqref{eq:bF_bu^r} can be equivalently expressed as
        \[
        \big(\bF_\bu^r + \bu \div \bu/2 + \nabla \wt p - \mu_0 [\nabla^T \bh] \bm, \bu \big) = 0,
        \]
        where the modified pressure $\wt p$ is given by
        \begin{equation} \label{eq:modified-pressure}
            \widetilde p := p  - \varepsilon \frac{p^2}{2} + \frac{\mu_0}{\chi} \bm \cdot \left( \chi \bh - \frac{\bm}{2} \right).
        \end{equation}
        Therefore, we choose $\bF_\bu^r$ to satisfy
        % \[
        $
        \bF_\bu^r = - \frac12 \bu \div \bu - \nabla \wt p + \mu_0 [\nabla^T \bh] \bm.
        $
        % \]

    \subsection{Equations and formal limits}
        Plugging $\bF_\bu^d = \nu D \bu$, $\bF_\bu^r = - \frac12 \bu \div \bu - \nabla \wt p + \mu_0 [\nabla^T \bh] \bm$, $\bF_p^r = -\bu$, and $\bF_\bm^d = - \frac{1}{\tau} (\bm - \chi\bh)$ back into the general equations \eqref{eq:ff-gen} yields
        \begin{equation*}
            \begin{cases}
                \D_t \bu + \frac12 \bu \div \bu - \nu \div D \bu + \nabla \widetilde p = \mu_0 [\nabla^T \bh] \bm, \qquad \varepsilon \D_t p + \div \bu = 0, \\
                \D_t \bm = -\frac{1}{\tau} (\bm - \chi \bh), \qquad \bcurl \bh = \bzero, \qquad \bb = \mu_0(\bm + \bh), \qquad \div \bb = 0,
            \end{cases}
        \end{equation*}
        where we recall that the modified pressure $\wt p$ is given by \eqref{eq:modified-pressure}.
        Furthermore, since $\bcurl \bh = 0$ implies that $\bh = \nabla \phi$ for some potential scalar function $\phi$, we have that $\nabla \bh = \nabla \nabla \phi$ is symmetric. Consequently, the forcing term in the momentum equation $(\nabla^T \bh) \bm = (\nabla \bh) \bm = (\bm \cdot \nabla) \bh$ is the aforementioned Kelvin force (see section \ref{sec:intro}).
        Hence, the ferrofluid equations reduce to
        \begin{equation} \label{eq:ff-eps}
            \begin{cases}
                \D_t \bu + \frac12 \bu \div \bu - \nu \div D \bu + \nabla \widetilde p = \mu_0 (\bm \cdot \nabla) \bh, \qquad \varepsilon \D_t p + \div \bu = 0, \\
                \D_t \bm = -\frac{1}{\tau} (\bm - \chi \bh), \quad \bcurl \bh = 0, \quad \bb = \mu_0(\bm + \bh), \quad \div \bb = 0.
            \end{cases}
        \end{equation}
        Recall that reversible terms $\bF_\bu^r, \bF_p^r$ do not contribute to $\frac{\d}{\dt} \sE_\varepsilon$. 
        Therefore, we deduce from \eqref{eq:abst-en-law} that system \eqref{eq:ff-eps} satisfies the energy law
        \begin{equation} \label{eq:ff-en-eps}
            \frac{\d}{\dt} \sE_\varepsilon[\bu, \bm, \bh] + 2 \sD[\bu, \bm, \bh] = \mu_0 (\pa_t \bh_a, \bh).
        \end{equation}
        % \begin{equation} \label{eq:ff-en-eps}
        %     \frac12 \frac{\d}{\dt} \left[ \|\bu\|^2 + \frac{\mu_0}{\chi} \|\bm\|^2 + \mu_0 \|\bh\|^2 + \varepsilon \|p\|^2 \right] + \nu \|D \bu\|^2 + \frac{\mu_0}{\tau \chi} \|\bm - \chi \bh\|^2 = \mu_0 (\pa_t \bh_a, \bh).
        % \end{equation}
        The term $\frac12 \bu \div \bu$ in \eqref{eq:ff-eps} is often used in numerical schemes to skew-symmetrize the convective term when the discrete velocities are not exactly divergence-free. It appeared naturally in our derivation, and is crucial in achieving energy law \eqref{eq:ff-en-eps} given that $\div \bu = - \varepsilon \D_t p \neq 0$.
        \begin{proposition}[governing PDEs and energy law]
            The ferrofluid system is governed by the following PDEs in the variables $(\bu, \wt p, \bh, \bm)$
            \begin{equation} \label{eq:ff-onsager}
                \begin{cases}
                    \D_t \bu - \nu \Delta \bu + \nabla \widetilde p = \mu_0 (\bm \cdot \nabla) \bh, \qquad \D_t \bm = -\frac{1}{\tau} (\bm - \chi \bh), \\
                    \div \bu = 0, \qquad \bcurl \bh = 0, \qquad \div (\bm + \bh) = 0,
                \end{cases}
            \end{equation}
            where the modified pressure $\wt p$ is given by $\wt p = p + \frac{\mu_0}{\chi} \bm \cdot \left( \chi \bh - \frac{\bm}{2} \right)$. Moreover, if $\sE := \sE_0$ in \eqref{eq:energies}, then this system obeys the energy law
            \begin{equation} \label{eq:en-law-short-not}
                \frac{\d}{\dt} \sE[\bu, \bm, \bh] + 2 \sD[\bu, \bm, \bh] = \mu_0 \big( \pa_t \bh_a, \bh \big).
            \end{equation}
        \end{proposition}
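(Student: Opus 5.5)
The plan is to obtain \eqref{eq:ff-onsager} as the formal limit $\varepsilon \downarrow 0$ of the relaxed system \eqref{eq:ff-eps}, and then to verify the energy law \eqref{eq:en-law-short-not} directly on the limit system.

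\emph{Limit system.} Setting $\varepsilon = 0$, the relaxed continuity equation $\varepsilon \D_t p + \div \bu = 0$ becomes $\div \bu = 0$. The modified pressure \eqref{eq:modified-pressure} loses the term $\varepsilon p^2/2$ and reduces to $\wt p = p + \frac{\mu_0}{\chi}\bm\cdot(\chi\bh - \bm/2)$, as claimed. With $\div \bu = 0$ the skew-symmetrizing term $\frac12 \bu \div \bu$ vanishes. Likewise, $\div D\bu = \frac12(\Delta \bu + \nabla \div \bu) = \frac12 \Delta \bu$. This gives the viscous term $-\frac{\nu}{2}\Delta\bu$; I will absorb the factor into $\nu$, or equivalently note that the dissipation functional is rewritten with $\|\nabla\bu\|^2$. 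Indeed, for $\bu=\bzero$ on $\pa\Omega$ and $\div\bu=0$, integration by parts gives $\|D\bu\|^2 = \frac12\|\nabla\bu\|^2$. I will flag this normalization of $\nu$ explicitly, since it is the main bookkeeping subtlety. The magnetic equations pass unchanged, and substituting $\bb = \mu_0(\bm+\bh)$ into $\div\bb=0$ gives $\div(\bm+\bh)=0$.

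\emph{Energy law.} Rather than relying on the formal limit, I will verify the energy law directly, following the computation behind \eqref{eq:ff-energy}.
\begin{enumerate}
\item Test the momentum equation with $\bu$. The convective term and $(\nabla\wt p,\bu)$ vanish because $\div\bu=0$ and $\bu=\bzero$ on $\pa\Omega$. This leaves $\frac12\frac{\d}{\dt}\|\bu\|^2 + \nu\|\nabla\bu\|^2 = \mu_0((\bm\cdot\nabla)\bh,\bu)$.
\item Test the magnetization equation with $\frac{\mu_0}{\chi}(\bm-\chi\bh)$. This produces $\frac{\mu_0}{2\chi}\frac{\d}{\dt}\|\bm\|^2 - \mu_0(\pa_t\bm,\bh)$ plus the convective terms $\frac{\mu_0}{\chi}((\bu\cdot\nabla)\bm,\bm) - \mu_0((\bu\cdot\nabla)\bm,\bh)$, balanced against $-\frac{\mu_0}{\tau\chi}\|\bm-\chi\bh\|^2$. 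The first convective term vanishes.
\item Handle $(\pa_t\bm,\bh)$ exactly as in \eqref{eq:pat-bb}--\eqref{eq:pat-bh-2}. Writing $\bh=\nabla\phi$ and using $\div(\bm+\bh-\bh_a)=0$ together with the vanishing normal trace, I get $(\pa_t\bm,\bh) = (\pa_t\bh_a,\bh) - \frac12\frac{\d}{\dt}\|\bh\|^2$. This supplies the $\frac{\mu_0}{2}\|\bh\|^2$ part of $\sE$ and the data term.
\end{enumerate}

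\emph{Main obstacle: the Kelvin force.} The key step is to show that the Kelvin force in step 1 cancels the remaining transport term $-\mu_0((\bu\cdot\nabla)\bm,\bh)$ from step 2. I will use the identity already derived above \eqref{eq:modified-pressure}: $((\bu\cdot\nabla)\bm,\bh) = -([\nabla^T\bh]\bm,\bu) + (\nabla(\bm\cdot\bh),\bu)$. The last term is zero since $\div\bu=0$. By symmetry of $\nabla\bh=\nabla\nabla\phi$, the first term equals $-((\bm\cdot\nabla)\bh,\bu)$, so the two contributions cancel exactly.

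Summing steps 1--3 then gives $\frac{\d}{\dt}\sE + \nu\|\nabla\bu\|^2 + \frac{\mu_0}{\tau\chi}\|\bm-\chi\bh\|^2 = \mu_0(\pa_t\bh_a,\bh)$. This is \eqref{eq:en-law-short-not} with $2\sD$ evaluated at the dissipative quantities \eqref{eq:dissip-quant}, since $2\sD = \nu\|D\bu\|^2 + \frac{\mu_0}{\tau\chi}\|\bm-\chi\bh\|^2$, up to the normalization between $\|D\bu\|^2$ and $\|\nabla\bu\|^2$ noted above. The result also agrees with \eqref{eq:ff-en-eps} at $\varepsilon=0$, which is a consistency check.
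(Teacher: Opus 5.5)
Your proposal is correct. For the PDE system it uses the same formal limit $\varepsilon \downarrow 0$ in \eqref{eq:ff-eps} as the paper. For the energy law your route differs. The paper simply lets $\varepsilon \downarrow 0$ in the Onsager identity \eqref{eq:ff-en-eps}. You instead verify the law directly on the limit system by testing with $\bu$ and $\frac{\mu_0}{\chi}(\bm - \chi\bh)$; the paper only alludes to this testing argument in the introduction and carries it out at the discrete level in Lemma \ref{lem:discr-energy}. The paper's route is a one-liner and shows the law is inherited from the Onsager structure. It is, however, purely formal: it passes to the limit in an identity derived with $\div\bu = -\varepsilon \D_t p \neq 0$, the term $\frac12\bu\div\bu$, and $\frac{\varepsilon}{2}\|p\|^2$ in the energy. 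Your route proves the identity for \eqref{eq:ff-onsager} itself. It also isolates the three structural facts behind the cancellation: skew-symmetry from $\div\bu = 0$, symmetry of $\nabla\bh = \nabla\nabla\phi$, and orthogonality of $\pa_t(\bm + \bh - \bh_a)$ to gradients. These are exactly what the discrete scheme later has to reproduce. Your direct check also exposes the bookkeeping issue you flag, and you are right about it. For solenoidal $\bu$ one has $\div D\bu = \frac12\Delta\bu$, not $\Delta\bu$ as the paper's proof asserts, and $\|D\bu\|^2 = \frac12\|\nabla\bu\|^2$ for $\bu \in \bH^1_0(\Omega)$. So with $\bF^d_\bu = \nu D\bu$ the limit momentum equation carries $-\frac{\nu}{2}\Delta\bu$, while $2\sD$ contains $\nu\|D\bu\|^2 = \frac{\nu}{2}\|\nabla\bu\|^2$. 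The proposition becomes consistent after renaming $\nu/2$ as $\nu$, or equivalently taking $\bF^d_\bu = 2\nu D\bu$ with $\frac{1}{4\nu}\|\bF^d_\bu\|^2$ in $\sD$. This also matches \eqref{eq:ff-energy} and Lemma \ref{lem:discr-energy}, where $2\sD$ contains $\nu\|\nabla\bu\|^2$. When you write this up, state the renormalization once and apply it to both the PDE and $\sD$, rather than invoking ``up to normalization'' only in the final line.
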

        \begin{proof}
            We take the formal limit as $\varepsilon \downarrow 0$ in \eqref{eq:ff-eps}, \eqref{eq:ff-en-eps} and note that $\div \bu = 0$, which implies $\div D \bu = \Delta \bu$. We also observe that $\div (\bm + \bh) = \mu_0^{-1} \div \bb = 0$.
        \end{proof}

    \subsection{Making the boundary data homogeneous} \label{sec:energy-law}
        Homogenizing the boundary condition $(\bm + \bh) \cdot \bn_{\pa \Omega} = \bh_a \cdot \bn_{\pa \Omega}$ will prove useful for the purposes of discretization. To that end, we define the shifted variable $\wt \bh := \bh - \bh_a$ that satisfies $(\bm + \wt \bh) \cdot \bn_{\pa \Omega} = 0$, and note that replacing $\bh$ with $\wt \bh + \bh_a$ in \eqref{eq:ff-onsager} yields
        \begin{equation} \label{eq:ff-shifted}
            \begin{cases}
                \D_t \bu - \nu \Delta \bu + \nabla p = \mu_0 (\bm \cdot \nabla) (\wt \bh + \bh_a), \qquad \D_t \bm = -\frac{1}{\tau} (\bm - \chi [\wt \bh + \bh_a]), \\
                \div \bu = 0, \qquad \bcurl \wt \bh = 0, \qquad \div (\bm + \wt \bh) = 0,
            \end{cases}
        \end{equation}
        where we relabeled $p$ to mean $\wt p$, and the last two equations remain the same for $\wt \bh$ because $\bh_a$ is harmonic. We next derive an alternative energy law for the shifted variable $\wt \bh$.
        \begin{lemma}[energy law for shifted $\bh$] \label{lem:cont-energy-law}
            The solution $(\bu, \bm, \bh)$ of \eqref{eq:ff-onsager} satisfies the following energy law for $\wt \bh = \bh - \bh_a$:
            \begin{equation} \label{eq:ff-en-law}
                \frac{\d}{\dt} \sE[\bu, \bm, \wt \bh] + 2 \sD[\bu, \bm, \wt \bh] = \mu_0 \int_\Omega (\bm \cdot \nabla) \bh_a \cdot \bu \dx + \frac{\mu_0}{\tau} \big(\bm - \chi \wt \bh, \bh_a \big).
            \end{equation}
        \end{lemma}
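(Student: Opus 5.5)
The plan is to mimic the derivation of \eqref{eq:ff-energy}, but to work directly with the shifted system \eqref{eq:ff-shifted} and the shifted variable $\wt\bh$. The argument is formal, assuming enough regularity to integrate by parts. First I will note that, since $\div\bu=0$, the dissipation functional evaluated at the shifted fields reads
\[
2\sD[\bu,\bm,\wt\bh] = \nu\|D\bu\|^2 + \frac{\mu_0}{\tau\chi}\|\bm-\chi\wt\bh\|^2 ,
\]
and that $\nu\|D\bu\|^2=\nu\|\nabla\bu\|^2$ because $\bu=\bzero$ on $\pa\Omega$. I will also rewrite the magnetization equation in \eqref{eq:ff-shifted} as
\[
\D_t\bm = -\tfrac1\tau(\bm-\chi\wt\bh) + \tfrac{\chi}{\tau}\bh_a .
\]
The last term is the source of the second data term in \eqref{eq:ff-en-law}.

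\emph{Step 1 (momentum).} I will test the momentum equation with $\bu$. The convective term vanishes by skew-symmetry, and the pressure term vanishes because $\div\bu=0$ and $\bu|_{\pa\Omega}=\bzero$. This gives
\[
\tfrac12\tfrac{\d}{\dt}\|\bu\|^2+\nu\|\nabla\bu\|^2=\mu_0\big((\bm\cdot\nabla)\wt\bh,\bu\big)+\mu_0\big((\bm\cdot\nabla)\bh_a,\bu\big).
\]

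\emph{Step 2 (magnetization).} I will test the rewritten magnetization equation with $\frac{\mu_0}{\chi}(\bm-\chi\wt\bh)$. The right-hand side yields $-\frac{\mu_0}{\tau\chi}\|\bm-\chi\wt\bh\|^2+\frac{\mu_0}{\tau}(\bh_a,\bm-\chi\wt\bh)$, which is exactly the dissipation plus the second data term. On the left-hand side:
\begin{itemize}
\item $\frac{\mu_0}{\chi}(\pa_t\bm,\bm)=\frac{\d}{\dt}\frac{\mu_0}{2\chi}\|\bm\|^2$;
\item $((\bu\cdot\nabla)\bm,\bm)=0$, using $\div\bu=0$ and $\bu|_{\pa\Omega}=\bzero$.
\end{itemize}
Two terms remain, namely $-\mu_0(\pa_t\bm,\wt\bh)$ and $-\mu_0((\bu\cdot\nabla)\bm,\wt\bh)$.

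\emph{Step 3 (the two key identities).} This is where the constraints enter, and I expect it to be the main obstacle.
\begin{itemize}
\item \emph{Time-derivative term.} Because $\bcurl\wt\bh=\bzero$ and $\Omega$ is simply connected, we can write $\wt\bh=\nabla\wt\phi$. Since $\div(\bm+\wt\bh)=0$ in $\Omega$ and $(\bm+\wt\bh)\cdot\bn_{\pa\Omega}=0$ on $\pa\Omega$ for all $t$, the same holds for $\pa_t(\bm+\wt\bh)$. Integrating by parts gives $(\pa_t(\bm+\wt\bh),\nabla\wt\phi)=0$, that is,
\[
-\mu_0(\pa_t\bm,\wt\bh)=\mu_0(\pa_t\wt\bh,\wt\bh)=\tfrac{\d}{\dt}\tfrac{\mu_0}{2}\|\wt\bh\|^2 .
\]
\item \emph{Convective term.} Integrating by parts with $\div\bu=0$ and $\bu|_{\pa\Omega}=\bzero$ gives $-((\bu\cdot\nabla)\bm,\wt\bh)=((\bu\cdot\nabla)\wt\bh,\bm)$. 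Because $\nabla\wt\bh=\nabla\nabla\wt\phi$ is symmetric, this equals $((\bm\cdot\nabla)\wt\bh,\bu)$.
\end{itemize}

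\emph{Step 4 (sum).} Adding the identities of Steps 1 and 2 and subtracting the Kelvin term $\mu_0((\bm\cdot\nabla)\wt\bh,\bu)$ from both sides cancels it exactly. What remains on the right-hand side is $\mu_0((\bm\cdot\nabla)\bh_a,\bu)+\frac{\mu_0}{\tau}(\bm-\chi\wt\bh,\bh_a)$. Collecting the time derivatives into $\frac{\d}{\dt}\sE[\bu,\bm,\wt\bh]$ and the dissipative terms into $2\sD[\bu,\bm,\wt\bh]$ then yields \eqref{eq:ff-en-law}.
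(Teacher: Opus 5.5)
Your argument is correct, and it takes a genuinely different route from the paper's proof. It is in fact the alternative the paper mentions right after its proof and then carries out at the discrete level in Lemma~\ref{lem:discr-energy}.

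The paper never tests the shifted system. It proceeds as follows:
\begin{itemize}
\item It starts from the unshifted law \eqref{eq:en-law-short-not}, substitutes $\bh=\wt\bh+\bh_a$, and expands the quadratic terms.
\item This leaves $-\mu_0(\pa_t\wt\bh,\bh_a)$ plus algebraic cross terms in $\bh_a$.
\item It removes that term using \eqref{eq:pat-bb}, i.e.\ the orthogonality of $\pa_t(\mu_0^{-1}\bb-\bh_a)$ to gradients, and then inserts the magnetization equation.
\item Finally it converts $((\bu\cdot\nabla)\bm,\bh_a)$ into $((\bm\cdot\nabla)\bh_a,\bu)$, using $\div\bu=0$ and the symmetry of $\nabla\bh_a$.
\end{itemize}

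You use the same two structural facts: a solenoidal, zero-flux field is orthogonal to gradients, and a curl-free field has a symmetric gradient, which combines with skew-symmetry of transport. The difference is that you apply them to $\wt\bh$ rather than $\bh_a$. As a result, the Kelvin force $\mu_0((\bm\cdot\nabla)\wt\bh,\bu)$ cancels against the convective coupling. The $\bh_a$-Kelvin term and the source $\frac{\chi}{\tau}\bh_a$ then produce the right-hand side directly, and no $\pa_t\bh_a$ terms ever appear.

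What each route buys:
\begin{itemize}
\item Your route is self-contained: it does not rely on the formal $\varepsilon\downarrow 0$ limit behind \eqref{eq:en-law-short-not}. It is also the version that survives Galerkin discretization.
\item The paper's route shows explicitly how the open-system data term $\mu_0(\pa_t\bh_a,\bh)$ of the Onsager law turns into the two data terms of the shifted law.
\end{itemize}

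One justification in your preliminaries is false. For $\bu\in\bH^1_0(\Omega)$ one has $\|D\bu\|^2=\frac{1}{2}\|\nabla\bu\|^2+\frac{1}{2}\|\div\bu\|^2$. So for solenoidal $\bu$, $\|D\bu\|^2=\frac{1}{2}\|\nabla\bu\|^2$, not $\|\nabla\bu\|^2$.

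Testing $-\nu\Delta\bu$ with $\bu$ therefore yields $2\nu\|D\bu\|^2$. That is twice the viscous part of $2\sD$ as literally defined through \eqref{eq:energies} and $\bF^d_\bu=\nu D\bu$.

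This mismatch originates in the paper. Its proof of the Proposition asserts $\div D\bu=\Delta\bu$, whereas $\div D\bu=\frac{1}{2}\Delta\bu$ for solenoidal $\bu$. The paper then tacitly reads the viscous part of $2\sD$ as $\nu\|\nabla\bu\|^2$; cf.\ \eqref{eq:ff-energy} and Lemma~\ref{lem:discr-energy}.

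You should state that convention explicitly rather than derive it from an incorrect identity. The paper's own proof of this lemma sidesteps the issue, because it carries the velocity dissipation over unchanged from \eqref{eq:en-law-short-not}.
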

        \begin{proof}
            We insert $\bh = \wt \bh + \bh_a$ into \eqref{eq:en-law-short-not} to arrive at
            \begin{equation} \label{eq:ff-en-law-wt-bh}
                \frac{\d}{\dt} \sE[\bu, \bm, \wt \bh + \bh_a] + 2 \sD[\bu, \bm, \wt \bh + \bh_a] = \mu_0 \big( \pa_t \bh_a, \wt \bh + \bh_a \big).
            \end{equation}
            Let $\bbI = \frac{\d}{\dt} \sE[\bu, \bm, \wt \bh] + 2 \sD[\bu, \bm, \wt \bh]$ be the quantity of interest. Expanding the squares containing $\wt \bh + \bh_a$ in \eqref{eq:ff-en-law-wt-bh}, we arrive at
            \begin{equation} \label{eq:bbI}
                % \begin{split}
                    \bbI = \mu_0 \big( \pa_t \bh_a, \wt \bh + \bh_a \big) - \mu_0 \frac{\d}{\dt} \big(\wt \bh, \bh_a \big) - \frac{\mu_0}{2} \frac{\d}{\dt} \|\bh_a\|^2 + \frac{2 \mu_0}{\tau} \big(\bm - \chi \wt \bh, \bh_a \big) - \frac{\mu_0 \chi}{\tau} \|\bh_a\|^2.
                % \end{split}
            \end{equation}
            By direct calculation, the terms involving derivatives in time read
            \[
            \begin{split}
                \mu_0 \big( \pa_t \bh_a, \wt \bh + \bh_a \big) - \mu_0 \frac{\d}{\dt} \big(\wt \bh, \bh_a \big) - \frac{\mu_0}{2} \frac{\d}{\dt} \|\bh_a\|^2 = - \mu_0 \big(\pa_t \wt \bh, \bh_a \big).
            \end{split}
            \]
            Using $\bh = \mu_0^{-1} \bb - \bm$ and $\bh = \wt \bh + \bh_a$, we deduce that $\pa_t \wt \bh = \pa_t [\mu_0^{-1} \bb - \bh_a] - \pa_t \bm$. Moreover, since $\bh_a = \nabla \phi_a$ is harmonic, we find that $\big( \pa_t [\mu_0^{-1} \bb - \bh_a], \bh_a \big) = 0$ from \eqref{eq:pat-bb}, and deduce that
            \[
            \begin{split}
                \big(\pa_t \wt \bh, \bh_a \big) & = \big( \pa_t [\mu_0^{-1} \bb - \bh_a], \bh_a \big) - \big(\pa_t \bm, \bh_a \big) \\
                & = - \big(\pa_t \bm, \bh_a \big) = \tau^{-1} \big(\bm - \chi(\wt \bh + \bh_a), \bh_a \big) + \big([\bu \cdot \nabla] \bm, \bh_a \big).
            \end{split}
            \]            
            Therefore, going back to \eqref{eq:bbI}, after simplifications, we obtain
            \begin{equation} \label{eq:bbI-end}
                \begin{split}
                    \bbI & = \mu_0 \tau^{-1} \big(\bm - \chi \wt \bh, \bh_a \big) - \mu_0 \big([\bu \cdot \nabla] \bm, \bh_a \big).
                \end{split}
            \end{equation}
            Finally, in view of $\div \bu = 0$ and $\nabla \bh_a = \nabla \nabla \phi_a = \nabla^T \bh_a$ because $\bh_a = \nabla \phi_a$ for some $\phi_a$, we deduce that
            \[
            \big([\bu \cdot \nabla] \bm, \bh_a \big) = - \big([\bu \cdot \nabla] \bh_a, \bm \big) = \big([\bm \cdot \nabla] \bh_a, \bu \big),
            \]
            which, combined with \eqref{eq:bbI-end}, yields the desired result.
        \end{proof}
        We note that the energy law \eqref{eq:ff-en-law} can also be obtained by testing the system \eqref{eq:ff-onsager} with suitable functions, as we will see in section \ref{sec:discr-en-law}.
    
\section{Space discretization} \label{sec:discretization}
    In this section, we propose a structure-preserving Galerkin scheme that captures all crucial properties satisfied by the exact solutions of system \eqref{eq:ff-shifted}:
    \begin{itemize}
        \item The discrete velocity $\bU$ is exactly divergence-free. 
        \item The discrete H-field $\bH$ is exactly curl-free and its gradient $\nabla \bH$ is a.e.~symmetric. 
        \item The discrete B-field $\mu_0(\bM + \bH)$ is exactly divergence-free.
    \end{itemize}
    These properties appear to be necessary to carry out the discrete perturbation estimate in section \ref{sec:pert-est}, responsible for our error analysis in sections \ref{sec:err-est-stokes-proj} and \ref{sec:err-analysis}. They are ensured by our method thanks to the introduction of the scalar potentials $\phi, \psi$ in section \ref{sec:scal-pot}.
    As a result, we prove that the discretization error is controlled by a projection error.
    Finally, we provide explicit examples of such schemes and derive their corresponding error estimates.

    \subsection{Reformulation using two scalar potentials} \label{sec:scal-pot}
        We assume that $d = 2$ for simplicity, and from now on we shall \textit{relabel $\bh$ to mean $\wt \bh$} from section \ref{sec:energy-law}. Since $\div (\bm + \bh) = 0$ and $\Omega$ is simply connected, we deduce the existence of a scalar potential $\psi$ that satisfies
        \begin{equation*}
            \bm + \bh = \bcurl \psi \text{ in } \Omega, \qquad \bcurl \psi \cdot \bn_{\pa \Omega} = 0 \text{ on } \pa \Omega, \qquad \big(\psi, 1 \big) = 0.
        \end{equation*}
        Similarly, because of $\curl \bh = \mathbf{0}$, there is a scalar potential $\phi$ that satisfies
        \begin{equation*}
            \bh = \nabla \phi \text{ in } \Omega, \qquad \big(\phi, 1 \big) = 0.
        \end{equation*}
        With this in mind, we can reformulate the ferrofluid system \eqref{eq:ff-shifted} as follows: given initial data $\bu_0, \psi_0, \phi_0$ and forcing data $\bh_a$, find $(\bu, p, \psi, \phi)$ with $\int_\Omega p \dx = \int_\Omega \phi \dx = \int_\Omega \psi \dx = 0$, such that
        \begin{equation} \label{eq:ff-hom}
            \begin{cases}
                \D_t \bu - \nu \Delta \bu + \nabla p = \mu_0 (\bm \cdot \nabla) (\bh + \bh_a), \qquad \D_t \bm = - \frac{1}{\tau} (\bm - \chi (\bh + \bh_a)), \\
                \div \bu = 0, \qquad \bh = \nabla \phi, \qquad \bm = \bcurl \psi - \nabla \phi,
            \end{cases}
        \end{equation}
        supplemented with boundary and initial conditions
        \begin{subequations}
            \begin{equation} \label{eq:ff-hom-bc-init}
                \bu|_{\pa \Omega} = \mathbf{0}, \qquad (\bcurl \psi \cdot \bn_{\pa \Omega})|_{\pa \Omega} = 0,
            \end{equation}
            \vspace{-2em}
            \begin{equation} \label{eq:ff-hom-ic}
                \bu(0) = \bu_0, \qquad \bm(0) = \bcurl \psi_0 - \nabla \phi_0, \qquad \bh(0) = \nabla \phi_0.
            \end{equation}
        \end{subequations}
        The boundary condition $\bcurl \psi \cdot \bn_{\pa \Omega} = 0$ equivalently reads as
        \[
        \nabla_{\pa \Omega} \psi = 0, \quad \text{ on } \pa \Omega,
        \]
        where $\nabla_{\pa \Omega}$ is the tangential derivative to $\pa \Omega$.
        This is equivalent to $\psi$ being constant on $\pa \Omega$ because $\pa \Omega$ is connected.
        Since the precise value of the constant is irrelevant, we set $\psi = 0$ on $\pa \Omega$ and drop the uniqueness condition $\int_\Omega \psi \dx = 0$.

        The simplified system now reads: given data $(\bu_0, \psi_0, \phi_0, \bh_a)$, find $(\bu, p, \psi, \phi)$ with $\int_\Omega p \dx = \int_\Omega \phi \dx = 0$ that solves \eqref{eq:ff-hom}, \eqref{eq:ff-hom-ic} and instead of \eqref{eq:ff-hom-bc-init} satisfies the homogeneous boundary conditions
        \begin{equation*}
            \bu|_{\pa \Omega} = \mathbf{0}, \qquad \psi|_{\pa \Omega} = 0.
        \end{equation*}
    
    \subsection{Spaces}\label{sec:spaces}
        We denote the finite dimensional subspaces for $\bu, p$, and $\psi, \phi$ respectively as
        \begin{equation}
            \bbU \subseteq \bH^1_0(\Omega), \quad \bbP \subseteq L^2_\#(\Omega), \quad \bbX \subseteq H^1(\Omega),
        \end{equation}
        where $L^2_\#(\Omega) := \{q \in L^2(\Omega): \int_\Omega q = 0\}$. We assume that the pair $(\bbU, \bbP)$ satisfies the uniform discrete inf-sup condition
        \begin{equation}\label{eq:disc-inf-sup}
        \inf_{\bV \in \bbU : \|\nabla \bV\| = 1} \sup_{Q \in \bbP : \|Q\| = 1} (\div \bV, Q)  \geq \beta > 0,
        \end{equation}
        where $\beta$ is independent of the cardinalities of $(\bbU, \bbP)$. We introduce the following subspaces of $\bbX$ to be used for $\phi, \psi$:
        \begin{equation}\label{eq:Xsubspaces}
            \bbX_\# := \bbX \cap L^2_\#(\Omega), \qquad \bbX_0 := \bbX \cap H^1_0(\Omega).
        \end{equation}
        Even though these mild assumptions are very convenient for discretization, they are insufficient for our perturbation argument to hold true.
        We therefore make the following stronger assumptions on the spaces $\bbU, \bbP, \bbX_\#, \bbX_0$.
        \begin{assump}[Galerkin scheme] \label{assump:scheme}
            We assume that $(\bbU, \bbP, \bbX)$ additionally satisfy
            \begin{enumerate}[label=\textbf{A\arabic*:}]
                \item ($H^2$-conformity of $\Phi, \Psi$) $\bbX \subseteq H^2(\Omega)$.
                \item (exactly $\div$-free $\bU$) If $(\div \bU, Q) = 0$ for all $Q \in \bbP$, then $\div \bU = 0$ a.e. in $\Omega$.
            \end{enumerate}
        \end{assump}
        The precise motivation for these two assumptions will become clearer in the next subsection. We mention that enforcing only the Dirichlet boundary condition on an $H^2$-conforming finite element space is not a trivial matter, but it can be achieved via an Uzawa--Powell--Nesterov interation; see section \ref{sec:experiment}.
        
    \subsection{Semidiscrete method}
        Let $\cB : \bH^1(\Omega) \times \bH^1(\Omega) \times \bH^1(\Omega) \to \bbR$ be defined as
        \begin{equation*}
            \cB(\bu, \bv, \bw) := \int_\Omega u_i \pa_i v_j w_j \dx = \int_\Omega (\bu \cdot \nabla) \bv \cdot \bw \dx. 
        \end{equation*}
        Given discrete initial data $(\bU_0, \Phi_0, \Psi_0) \in \bbU \times \bbX_\# \times \bbX_0$, we set
        \[
        \bU(0) = \bU_0, \qquad \bH(0) = \nabla \Phi_0, \qquad \bM(0) = \nabla \Phi_0 + \bcurl \Psi_0.
        \]
        Given forcing $\bh_a$, for a.e. $t \in (0,T)$, we seek $(\bU, P, \Phi, \Psi) \in \bbU \times \bbP \times \bbX_\# \times \bbX_0$ such that
        \begin{equation} \label{eq:semidiscrete}
            \begin{cases}
                \big(\D_t \bU, \bV_\bu \big) + \nu \big(\nabla \bU, \nabla \bV_\bu \big) - \big(P, \div \bV_\bu \big) & = \mu_0 \cB(\bM, \bH, \bV_\bu) + \mu_0 \cB(\bM, \bh_a, \bV_\bu), \\
                (\div \bU, V_p) & = 0, \\
                \big(\D_t \bM, \bV_\bm \big) & = - \frac{1}{\tau} \big(\bM - \chi \bH, \bV_\bm \big) + \frac{\chi}{\tau} \big(\bh_a, \bV_\bm \big),
            \end{cases}
        \end{equation}
        for all $(\bV_\bu, V_p, \bV_\bm) \in \bbU \times \bbP \times [\nabla \bbX_\# \oplus \bcurl \bbX_0]$, where $\D_t = \pa_t + (\bU \cdot \nabla)$ and
        \begin{equation} \label{eq:bH-bM}
            \bH := \nabla \Phi \in \bH^1(\Omega), \qquad \bM := \bcurl \Psi - \nabla \Phi \in \nabla \bbX_\# \oplus \bcurl \bbX_0 \subseteq \bH^1(\Omega).
        \end{equation}
        Note that because the test space for the second equation is a sum space, the three equations in our semidiscrete method can also be regarded as four equations.
        
        Let us now elucidate the key properties of solutions $\sfU := (\bU, P, \bM, \bH)$ of \eqref{eq:semidiscrete}. From (\textbf{A2}) and \eqref{eq:semidiscrete}, we know that $\div \bU = 0$, and from \eqref{eq:bH-bM} we know that $\div (\bM + \bH) = \div \bcurl \Psi = 0$, hence
        \begin{subequations}
            \begin{equation} \label{eq:skew-u} \tag{skew-$\bU$}
                \cB(\bU, \bv, \bw) = - \cB(\bU, \bw, \bv), \qquad
            \end{equation}
            \vspace{-2em}
            \begin{equation} \label{eq:skew-m+h} \tag{skew-$\bM+\bH$}
                \qquad \ \ \ \quad \cB(\bM + \bH, \bv, \bw) = - \cB(\bM + \bH, \bw, \bv),
            \end{equation}
        \end{subequations}
        for all $\bw, \bv \in \bH^1(\Omega)$. The boundary terms vanish because $\bU = 0$ and $\Psi = 0$ on $\pa \Omega$, whence $(\bM + \bH) \cdot \bn_{\pa \Omega} = \bcurl \Psi \cdot \bn_{\pa \Omega} = \nabla_{\pa \Omega} \Psi = \bzero$. From (\textbf{A1}) and \eqref{eq:bH-bM}, we know that $\nabla \bH = \nabla \nabla \Phi = \nabla^T \bH$ is symmetric, whence
        \begin{equation} \label{eq:sym} \tag{symm-$\nabla \bH$}
            \cB(\bu, \bH, \bw) = \cB(\bw, \bH, \bu), \qquad \forall \bu, \bw \in \bH^1(\Omega).
        \end{equation}
        These precise identities at the discrete level are not required to derive an energy law, but rather to carry out our error analysis.
        
        The semidiscrete problem \eqref{eq:semidiscrete} is well-posed on $(0, T^*)$ for some $T^* > 0$ due to the Cauchy--Lipschitz theorem for ODEs. Obtaining a priori bounds in the next subsection allows extending the interval of well-posedness all the way to $T^* = T$.
        
    \subsection{Discrete energy law} \label{sec:discr-en-law}
        The following discrete energy law mimics the continuous energy law of section \ref{sec:energy-law}.
        \begin{lemma}[discrete energy law] \label{lem:discr-energy}
            The semidiscrete solution $(\bU, P, \Phi, \Psi)$ of \eqref{eq:semidiscrete} satisfies
            \begin{equation} \label{eq:discr-en-eq}
                \frac{\d}{\dt} \sE[\bU, \bM, \bH](t) + 2 \sD[\bU, \bM, \bH](t) = \mu_0 \cB(\bM, \bh_a, \bU) + \frac{\mu_0}{\tau} \big(\bh_a, \bM - \chi \bH \big).
            \end{equation}
        \end{lemma}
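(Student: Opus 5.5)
We test the semidiscrete system \eqref{eq:semidiscrete} with discrete functions mimicking the continuous argument. In the momentum equation we take $\bV_\bu = \bU \in \bbU$. In the magnetization equation we take $\bV_\bm = \frac{\mu_0}{\chi}(\bM - \chi\bH)$. This test function is admissible: $\bM - \chi \bH = \bcurl \Psi - (1+\chi)\nabla\Phi \in \nabla \bbX_\# \oplus \bcurl \bbX_0$.

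For the momentum equation, \eqref{eq:skew-u} gives $\cB(\bU,\bU,\bU)=0$. Next, $(P,\div\bU)=0$ because $\div \bU = 0$ by (\textbf{A2}). Hence
\[
\tfrac12\tfrac{\d}{\dt}\|\bU\|^2 + \nu\|\nabla\bU\|^2 = \mu_0\cB(\bM,\bH,\bU) + \mu_0\cB(\bM,\bh_a,\bU).
\]
Here $\nu\|\nabla\bU\|^2$ should be matched with the viscous part of $2\sD$ under the paper's convention, where $D\bU$ and $\nabla\bU$ agree in the relevant sense for div-free fields with zero trace.

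For the magnetization equation, I split $\frac{\mu_0}{\chi}(\pa_t\bM,\bM-\chi\bH) = \frac{\mu_0}{2\chi}\frac{\d}{\dt}\|\bM\|^2 - \mu_0(\pa_t\bM,\bH)$. The key step is to show $(\pa_t\bM,\bH) = -(\pa_t\bH,\bH)$. Since $\pa_t(\bM+\bH) = \bcurl\pa_t\Psi$ with $\pa_t\Psi\in\bbX_0$, we get $(\bcurl \pa_t\Psi,\nabla\Phi)=0$ by integrating by parts; the boundary term vanishes because $\pa_t \Psi = 0$ on $\pa\Omega$. This produces $\frac{\mu_0}{2}\frac{\d}{\dt}\|\bH\|^2$, which is the discrete analogue of \eqref{eq:pat-bb}. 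The relaxation term gives $-\frac{\mu_0}{\tau\chi}\|\bM-\chi\bH\|^2$, which matches the second dissipation term. The forcing gives $\frac{\mu_0}{\tau}(\bh_a,\bM-\chi\bH)$.

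The main obstacle is the convective terms. We must show
\[
\tfrac{\mu_0}{\chi}\cB(\bU,\bM,\bM-\chi\bH) = \mu_0\cB(\bM,\bH,\bU)
\]
so that it cancels the Kelvin term. First, $\cB(\bU,\bM,\bM)=0$ by \eqref{eq:skew-u}, so what must be shown is $-\mu_0\cB(\bU,\bM,\bH) = \mu_0\cB(\bM,\bH,\bU)$. By \eqref{eq:skew-u}, $-\cB(\bU,\bM,\bH) = \cB(\bU,\bH,\bM)$. By \eqref{eq:sym}, $\cB(\bU,\bH,\bM) = \cB(\bM,\bH,\bU)$. This is exactly the Kelvin term, so moving everything to the left-hand side cancels it.

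Summing the two tested equations gives $\frac{\d}{\dt}\sE + 2\sD$ on the left. The right-hand side is $\mu_0\cB(\bM,\bh_a,\bU) + \frac{\mu_0}{\tau}(\bh_a,\bM-\chi\bH)$, which is \eqref{eq:discr-en-eq}.

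The delicate bookkeeping is the sign of the transport term after the Kelvin-force cancellation. The regularity of $\bH\in\bH^1$ needed for \eqref{eq:sym} comes from (\textbf{A1}).
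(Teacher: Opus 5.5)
Your proposal is correct and follows the paper's proof essentially step for step. It uses the same test functions $\bV_\bu = \bU$ and $\bV_\bm = \frac{\mu_0}{\chi}(\bM - \chi\bH)$, the same cancellation of the Kelvin force against the transport term via \eqref{eq:skew-u} and \eqref{eq:sym}, and the same orthogonality $(\bcurl \pa_t \Psi, \nabla \Phi) = 0$ to turn $-\mu_0(\pa_t\bM,\bH)$ into $\frac{\mu_0}{2}\frac{\d}{\dt}\|\bH\|^2$. Your hedge about matching $\nu\|\nabla\bU\|^2$ with the viscous part of $2\sD$ reflects a factor-of-two looseness in the paper's own convention: for divergence-free $\bU$ with zero trace one has $\|\nabla\bU\|^2 = 2\|D\bU\|^2$, not equality. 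The paper's proof passes over this silently too, so it is not a defect of your argument.
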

        \begin{proof}
            We test \eqref{eq:semidiscrete} with $\bV_\bu = \bU$, $\bV_\bm = \frac{\mu_0}{\chi} (\bM - \chi \bH)$, and note that $\cB(\bU, \bU, \bU) = 0$ due to \eqref{eq:skew-u}. This leads to
            \begin{equation} \label{eq:en-law-pf-1}
                \begin{split}
                    \frac12 \frac{\d}{\dt} \|\bU\|^2 + \nu \|\nabla \bU\|^2 & = \mu_0 \cB(\bM, \bH, \bU) + \mu_0 \cB(\bM, \bh_a, \bU), \\
                    \frac{\mu_0}{2 \chi} \frac{\d}{\dt} \|\bM\|^2 - \mu_0 \big(\pa_t \bM, \bH \big) - \mu_0 \cB(\bU, \bM, \bH) & = - \frac{\mu_0}{\tau \chi} \|\bM - \chi \bH\|^2 + \frac{\mu_0}{\tau} \big(\bh_a, \bM - \chi \bH \big).
                \end{split}
            \end{equation}
            We next crucially use \eqref{eq:sym} and \eqref{eq:skew-u} to arrive at
            \[
            \cB(\bM, \bH, \bU) = \cB(\bU, \bH, \bM) = - \cB(\bU, \bM, \bH).
            \]
            This allows canceling out the Kelvin force term $\mu_0 \cB(\bM, \bH, \bU)$ with the convective term $-\mu_0 \cB(\bU, \bM, \bH)$ in \eqref{eq:en-law-pf-1}. Thus, adding the two equalities in \eqref{eq:en-law-pf-1} yields
            \[
            \begin{split}
                \frac12 \frac{\d}{\dt} \|\bU\|^2 & + \frac{\mu_0}{2 \chi} \frac{\d}{\dt} \|\bM\|^2 + \nu \|\nabla \bU\|^2 + \frac{\mu_0}{\tau \chi} \|\bM - \chi \bH\|^2 \\
                & = \mu_0 \big(\pa_t \bM, \bH \big) + \mu_0 \cB(\bM, \bh_a, \bU) + \frac{\mu_0}{\tau} \big(\bh_a, \bM - \chi \bH \big).
            \end{split}
            \]
            Using the fact that $\bM = \bcurl \Psi - \nabla \Phi$ and that $\bH = \nabla \Phi$, we get
            \[
            \big(\pa_t \bM, \bH \big) = \big(\pa_t [\bcurl \Psi - \nabla \Phi], \nabla \Phi \big) = \big(\bcurl \pa_t \Psi - \nabla \pa_t \Phi, \nabla \Phi \big) = - \frac12 \frac{\d}{\dt} \|\bH\|^2,
            \]
            because $(\bcurl \pa_t \Psi, \nabla \Phi) = 0$. Altogether, we get the asserted energy equality.
        \end{proof}
        Let $I := (0,T)$ be a time interval.
        We next deduce an energy estimate for the discrete energy law \eqref{eq:discr-en-eq}.
        \begin{lemma}[energy estimate]
            We have the following energy estimate for all $t \in (0,T]$,
            \[
            \begin{split}
                \sE[\bU, \bM, \bH](t) + \int_0^t \sD[\bU, \bM, \bH](s) \ds & \leq \left(\sE[\bU, \bM, \bH](0) + \frac{\mu_0 \chi}{2 \tau} \int_I \|\bh_a\|^2 \ds \right) \\
                & \quad \times \exp\left(\mu_0^{1/2} \chi^{1/2} \|\nabla \bh_a\|_{L^1(I; \dL^\infty)} \right).
            \end{split}
            \]
        \end{lemma}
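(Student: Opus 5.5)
We start from the discrete energy law of Lemma~\ref{lem:discr-energy} and bound the two data terms on its right-hand side, aiming for a Gronwall inequality. Recall $\sE[\bU,\bM,\bH] = \frac12\|\bU\|^2 + \frac{\mu_0}{2\chi}\|\bM\|^2 + \frac{\mu_0}{2}\|\bH\|^2$. Also recall $2\sD = \nu\|\nabla \bU\|^2 + \frac{\mu_0}{\tau\chi}\|\bM-\chi\bH\|^2$, read off from the proof of Lemma~\ref{lem:discr-energy}.

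For the relaxation term we use Cauchy--Schwarz and Young:
\[
\frac{\mu_0}{\tau}\big(\bh_a,\bM-\chi\bH\big) \le \frac{\mu_0}{2\tau\chi}\|\bM-\chi\bH\|^2 + \frac{\mu_0\chi}{2\tau}\|\bh_a\|^2 .
\]
The first piece is absorbed into $2\sD$, leaving exactly $\sD$ on the left. The second piece produces the data term $\frac{\mu_0\chi}{2\tau}\|\bh_a\|^2$ appearing in the statement.

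For the Kelvin term we use Hölder with $\nabla\bh_a\in \dL^\infty$:
\[
\mu_0\cB(\bM,\bh_a,\bU) \le \mu_0\|\nabla\bh_a\|_{\dL^\infty}\|\bM\|\,\|\bU\| .
\]
By weighted AM--GM,
\[
\mu_0\|\bM\|\|\bU\| = 2\Big(\tfrac{\mu_0}{2\chi}\|\bM\|^2\Big)^{1/2}\Big(\tfrac12\|\bU\|^2\Big)^{1/2}(\mu_0\chi)^{1/2} \le (\mu_0\chi)^{1/2}\Big(\tfrac{\mu_0}{2\chi}\|\bM\|^2+\tfrac12\|\bU\|^2\Big).
\]
The last bracket is at most $(\mu_0\chi)^{1/2}\sE$. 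This is where the constant $\mu_0^{1/2}\chi^{1/2}$ in the exponent comes from. This weighted splitting is the only delicate step: the weights must be chosen so the product is controlled by $\sE$ itself with precisely that constant, rather than by a cruder multiple.

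Combining the two bounds gives
\[
\frac{\d}{\dt}\sE + \sD \le a(t)\,\sE + f(t), \qquad a := \mu_0^{1/2}\chi^{1/2}\|\nabla\bh_a\|_{\dL^\infty}, \qquad f := \frac{\mu_0\chi}{2\tau}\|\bh_a\|^2 .
\]
Integrating over $(0,t)$ yields
\[
y(t) := \sE(t)+\int_0^t\sD \le \sE(0)+\int_I f + \int_0^t a\,y ,
\]
where we used $\sE\le y$ because $\sD\ge0$. The integral form of Gronwall's lemma then gives $y(t)\le \big(\sE(0)+\int_I f\big)\exp\big(\int_0^t a\big)$. Finally, $\int_0^t a\le \mu_0^{1/2}\chi^{1/2}\|\nabla\bh_a\|_{L^1(I;\dL^\infty)}$, which is the asserted estimate.

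This bound also furnishes the a priori control that extends the semidiscrete solution to all of $(0,T]$.
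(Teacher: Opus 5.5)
Your proof is correct and follows the paper's argument step for step. Both proceed the same way: Young's inequality on the relaxation term, with half of the magnetic dissipation absorbed; the bound $|\cB(\bM,\bh_a,\bU)|\le\|\nabla\bh_a\|_{\dL^\infty}\|\bM\|\|\bU\|$ followed by the same weighted Young splitting that produces $\mu_0^{1/2}\chi^{1/2}$; then time integration and Gr\"onwall. The one imprecision is that after absorption the left-hand side retains $\sD+\frac{\nu}{2}\|\nabla\bU\|^2\ge\sD$ rather than \emph{exactly} $\sD$, which is all you need.
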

        \begin{proof}
            We start with the discrete energy equality \eqref{eq:discr-en-eq}:
            \begin{equation} \label{eq:discr-en-eq-2}
                \frac{\d}{\dt} \sE[\bU, \bM, \bH](t) + 2 \sD[\bU, \bM, \bH](t) = \mu_0 \cB(\bM, \bh_a, \bU) + \frac{\mu_0}{\tau} \big(\bh_a, \bM - \chi \bH \big).
            \end{equation}
            We need to estimate the two terms on the right-hand side above. We have
            \[
            \frac{\mu_0}{\tau} |\big(\bh_a, \bM - \chi \bH \big)| \leq \frac{\chi \mu_0}{2 \tau} \|\bh_a\|^2 + \frac{\mu_0}{2 \chi \tau} \|\bM - \chi \bH\|^2,
            \]
            from which the second term above can be absorbed on the left-hand side of \eqref{eq:discr-en-eq-2}. For the other, nonlinear term, we factor out $\|\nabla \bh_a\|_{\dL^\infty}$ and use Young's inequality with $\varepsilon = \frac{\mu_0^{1/2}}{\chi^{1/2}}$ to arrive at
            \[
            \begin{split}
                \mu_0 |\cB(\bM, \bh_a, \bU)| & \leq \mu_0^{1/2} \chi^{1/2} \|\nabla \bh_a\|_{\dL^\infty} \left(\frac{\mu_0}{2 \chi} \|\bM\|^2 + \frac{1}{2} \|\bU\|^2 \right).
            \end{split}
            \]
            Integrating \eqref{eq:discr-en-eq} in time yields
            \[
            \begin{split}
                \sE[\bU, \bM, \bH](t) & + \int_0^t \sD[\bU, \bM, \bH](s) \ds \leq \sE[\bU, \bM, \bH](0) + \frac{\mu_0 \chi}{2 \tau} \int_I \|\bh_a\|^2 \ds \\
                & \qquad + \mu_0^{1/2} \chi^{1/2} \|\nabla \bh_a\|_{L^1(I; \dL^\infty)} \int_0^t \sE[\bU, \bM, \bH](s) \ds.
            \end{split}
            \]
            Applying Gr\"onwall's inequality yields the desired estimate.
        \end{proof}
        
    \subsection{Discrete perturbation estimate} \label{sec:pert-est}
        Let $(\bu, p, \phi, \psi)$ be a smooth weak solution corresponding to smooth data $(\bu_0$, $\bm_0$, $\bh_0$, $\bh_a)$. We let $\osfU := (\obU, \oP, \oPhi, \oPsi)$ be suitable projections of $(\bu, p, \phi, \psi)$ onto $\bbU \times \bbP \times \bbX_\# \times \bbX_0$ with the requirement that $\div \obU = 0$. We define the discretization, projection, and discrete errors, respectively, as follows:
        \[
        \begin{split}
            \sfe & := (\be_\bu, e_p, \be_\bm, \be_\bh) := (\bu - \bU, p - P, \bm - \bM, \bh - \bH), \\
            \sfe^p & := (\be_\bu^p, e_p^p, \be_\bm^p, \be_\bh^p) := (\bu - \obU, p - \oP, \bm - \obM, \bh - \obH), \\
            \sfE & := (\bE_\bu, E_p, \bE_\bm, \bE_\bh) := (\bU - \obU, P - \oP, \bM - \obM, \bH - \obH).
        \end{split}
        \]
        Before we proceed with the error equation, we shall introduce the following notation for the operators associated with the momentum and magnetization equations:
        \[
        \begin{split}
            \big(\sM_\bu \sfu, \bV_\bu \big) & := \big(\pa_t \bu, \bV_\bu \big) + \cB(\bu, \bu, \bV_\bu) + \nu \big(\nabla \bu, \nabla \bV_\bu \big) - \big(p, \div \bV_\bu \big), \\
            & \quad - \mu_0 \cB(\bm, \bh, \bV_\bu) - \mu_0 \cB(\bm, \bh_a, \bV_\bu), \\ 
            \big(\sM_\bm \sfu, \bV_\bm \big) & := \big(\pa_t \bm, \bV_\bm \big) + \cB(\bu, \bm, \bV_\bm) + \frac{1}{\tau} \big(\bm - \chi \bh, \bV_\bm \big),
        \end{split}
        \]
        where $\sfu = (\bu, p, \phi, \psi) \in \bH^1_0(\Omega) \times L^2_\#(\Omega) \times H^2(\Omega) \cap L^2_\#(\Omega) \times H^2(\Omega) \cap H^1_0(\Omega)$, and $\bh = \nabla \phi, \bm = \bcurl \psi - \nabla \phi$, and $(\bV_\bu, \bV_\bm) \in \bbU \times (\nabla \bbX_\# \oplus \bcurl \bbX_0)$. Since $\sfu$ is a smooth weak solution, we see that
        \begin{equation} \label{eq:weak-sol-iden}
            \big(\sM_\bu \sfu, \bV_\bu \big) = 0, \quad \forall \bV_\bu \in \bbU, \qquad \big(\sM_\bm \sfu, \bV_\bm \big) = 0, \quad \forall \bV_\bm \in \nabla \bbX_\# \oplus \bcurl \bbX_0.
        \end{equation}
        Below we will apply $\sM_\bu$ and $\sM_\bm$ to $\sfE$ and quantify the deviation from zero of the right-hand side of \eqref{eq:weak-sol-iden} in terms of the projection error $\sfe^p$. We now derive the error equation.
        \begin{lemma}[error equation]
            The discrete error $\sfE = (\bE_\bu, E_p, \bE_\bm, \bE_\bh)$ satisfies
            \begin{equation} \label{eq:err-eq}
                \begin{cases}
                    \big(\sM_\bu \sfE, \bV_\bu \big) = \big(\bR_\bu, \bV_\bu \big), \qquad \big(\sM_\bm \sfE, \bV_\bm \big) = \big(\bR_\bm, \bV_\bm \big), \\
                    \big(\div \bE_\bu, V_p \big) = 0, \qquad \bE_\bh = \nabla E_\phi, \qquad \bE_\bm = \bcurl E_\psi - \nabla E_\phi,
                \end{cases}
            \end{equation}
            for all $(\bV_\bu, V_p, \bV_\bm) \in \bbU \times \bbP \times (\nabla \bbX_\# \oplus \bcurl \bbX_0)$. The residuals $\bR_\bu, \bR_\bm$ are given by
            \begin{equation} \label{eq:residuals}
                % \begin{split}
                    \big(\bR_\bu, \bV_\bu \big) = \big(\bN_\bU, \bV_\bu \big) - \big(\sM_\bu \osfU, \bV_\bu \big), \qquad
                    \big(\bR_\bm, \bV_\bm \big) = \big(\bN_\bM, \bV_\bm \big) - \big(\sM_\bm \osfU, \bV_\bm \big),
                % \end{split}
            \end{equation}
            where $\bN_\bU, \bN_\bM$ account for the nonlinear structure of the system and are given by
            \begin{multline} \label{eq:bN}
                \big(\bN_\bU, \bV_\bu \big) := - \cB(\bE_\bu, \obU, \bV_\bu) - \cB(\obU, \bE_\bu, \bV_\bu) + \mu_0 \cB(\bE_\bm, \obH, \bV_\bu) + \mu_0 \cB(\obM, \bE_\bh, \bV_\bu), \\
                \big(\bN_\bM, \bV_\bm \big) := - \cB(\obU, \bE_\bm, \bV_\bm) - \cB(\bE_\bu, \obM, \bV_\bm). \qquad \qquad \qquad \qquad \qquad \quad \ \ \ \,
            \end{multline}
        \end{lemma}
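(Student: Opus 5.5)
The plan is a direct algebraic computation: write every semidiscrete quantity as $\sfU = \sfE + \osfU$, expand the operators $\sM_\bu$ and $\sM_\bm$, and sort the resulting terms into three groups: those forming $\sM_\bu\sfE$ and $\sM_\bm\sfE$, those forming $\sM_\bu\osfU$ and $\sM_\bm\osfU$, and the mixed terms, which will be $\bN_\bU$ and $\bN_\bM$. The statement then follows by using that the semidiscrete scheme \eqref{eq:semidiscrete} says precisely that $\sfU$ annihilates $\sM_\bu$ and $\sM_\bm$ on the discrete test spaces.

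\emph{Step 1: the scheme in operator form.} Expanding $\D_t = \pa_t + (\bU\cdot\nabla)$ gives $(\D_t \bU,\bV_\bu) = (\pa_t\bU,\bV_\bu) + \cB(\bU,\bU,\bV_\bu)$, and similarly for $\bM$. Hence the first equation of \eqref{eq:semidiscrete} reads $(\sM_\bu \sfU,\bV_\bu)=0$ for all $\bV_\bu\in\bbU$, where $\sfU$ carries $\bH=\nabla\Phi$ and $\bM=\bcurl\Psi-\nabla\Phi$. In the same way, the third equation is $(\sM_\bm\sfU,\bV_\bm)=0$.

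The forcing $\bh_a$ needs care. It enters $\sM_\bu$ through $-\mu_0\cB(\bm,\bh_a,\cdot)$, which is linear in $\bm$, and the magnetization equation carries the term $\frac{\chi}{\tau}(\bh_a,\cdot)$. I will fix the convention that these data terms appear in $\sM_\bu$ and $\sM_\bm$ exactly as in the shifted system \eqref{eq:ff-shifted}. Then \eqref{eq:weak-sol-iden} and the scheme are stated with the same operators, and the purely affine part $\frac{\chi}{\tau}(\bh_a,\cdot)$ has to be counted only once.

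\emph{Step 2: splitting $\sM$ into linear and bilinear parts.} Write $\sM_\bu = \dL_\bu + \cB(\bu,\bu,\cdot) - \mu_0\cB(\bm,\bh,\cdot)$. Here $\dL_\bu$ is linear in $\sfu$ and collects the $\pa_t$ term, the viscous term, the pressure term and $-\mu_0\cB(\bm,\bh_a,\cdot)$. Likewise write $\sM_\bm = \dL_\bm + \cB(\bu,\bm,\cdot)$. Substituting $\sfU=\sfE+\osfU$, linearity gives $\dL(\sfE+\osfU)=\dL\sfE+\dL\osfU$. Trilinearity of $\cB$ gives
\[
\cB(\bU,\bU,\cdot)=\cB(\bE_\bu,\bE_\bu,\cdot)+\cB(\obU,\obU,\cdot)+\cB(\bE_\bu,\obU,\cdot)+\cB(\obU,\bE_\bu,\cdot),
\]
together with the analogous expansions of $\cB(\bM,\bH,\cdot)$ and $\cB(\bU,\bM,\cdot)$.

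\emph{Step 3: regrouping.} The $\sfE$-only terms assemble into $\sM_\bu\sfE$ and $\sM_\bm\sfE$, and the $\osfU$-only terms assemble into $\sM_\bu\osfU$ and $\sM_\bm\osfU$. Since $0=(\sM_\bu\sfU,\bV_\bu)$, moving the cross terms to the right-hand side gives
\[
(\sM_\bu\sfE,\bV_\bu) = -(\sM_\bu\osfU,\bV_\bu) - \cB(\bE_\bu,\obU,\bV_\bu) - \cB(\obU,\bE_\bu,\bV_\bu) + \mu_0\cB(\bE_\bm,\obH,\bV_\bu) + \mu_0\cB(\obM,\bE_\bh,\bV_\bu).
\]
This is exactly $(\bN_\bU,\bV_\bu)-(\sM_\bu\osfU,\bV_\bu)$, as in \eqref{eq:bN} and \eqref{eq:residuals}. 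The magnetization equation is handled identically: the cross terms are $-\cB(\obU,\bE_\bm,\cdot)-\cB(\bE_\bu,\obM,\cdot)$, which gives $\bN_\bM$.

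\emph{Step 4: the remaining relations.} The divergence relation follows from $(\div\bU,V_p)=0$ together with $\div\obU=0$. The relations $\bE_\bh=\nabla E_\phi$ and $\bE_\bm=\bcurl E_\psi-\nabla E_\phi$ follow from linearity of \eqref{eq:bH-bM}, with $E_\phi:=\Phi-\oPhi$ and $E_\psi:=\Psi-\oPsi$.

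The main obstacle is bookkeeping rather than analysis. The affine data terms must not be double counted: with the stated definition of $\sM_\bm$, the term $\frac{\chi}{\tau}(\bh_a,\cdot)$ would otherwise survive as a spurious residual, so the convention fixed in Step 1 is what makes the identity hold. Keeping track of signs across the two trilinear expansions requires similar care.
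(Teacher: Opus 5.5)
Your proposal is correct and is essentially the paper's proof: write $\sfU=\sfE+\osfU$, use trilinearity of $\cB$ on the convective, Kelvin and transport terms to isolate the cross terms $\bN_\bU,\bN_\bM$, use that $\sfU$ satisfies the scheme, and note that the constraints are linear (with $\div\obU=0$). Your remark on the affine term $\frac{\chi}{\tau}(\bh_a,\cdot)$ addresses a point the paper passes over silently. The consistent reading, which is what your ``counted only once'' amounts to, is that $\sM_\bm$ acts without this data term on $\sfE$ but with it on $\sfU$, $\osfU$ and $\sfu$, so your $\dL_\bm$ in Step~2 should be read as the linear part only.
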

        \begin{proof}
            We proceed in three steps, examining each equation separately. \smallskip

            \noindent \textbf{Step 1}: \textit{Momentum equation}. We define $\bR_\bu$ to be the result of inserting $\sfE$ into the momentum equation, namely $\big(\bR_\bu, \bV_\bu \big) := \big(\sM_\bu \sfE, \bV_\bu \big)$. We first recall that the Galerkin solution $\sfU$ solves the semidiscrete problem \eqref{eq:semidiscrete}: $(\sM_\bu \sfU, \bV_\bu) = 0$. We next observe that $\sM_\bu$ has a couple of nonlinear terms, the convective term and the Kelvin force. When evaluated on $\sfE$, they give rise to the nonlinear terms $(\bN_\bU, \bV_\bu)$. To see this, consider the contribution of the convective terms:
            \[
            \begin{split}
                \cB(\bE_\bu, \bE_\bu, \bV_\bu) - \cB(\bU, \bU, \bV_\bu) + \cB(\obU, \obU, \bV_\bu) = - \cB(\bE_\bu, \obU, \bV_\bu) - \cB(\obU, \bE_\bu, \bV_\bu).
            \end{split}
            \]
            Likewise, the Kelvin force yields
            \[
            \begin{split}
                -\mu_0[\cB(\bE_\bm, \bE_\bm, \bV_\bu) & - \cB(\bM, \bH, \bV_\bu) + \cB(\obM, \obH, \bV_\bu)] \\
                & = \mu_0[\cB(\bE_\bm, \obH, \bV_\bu) + \cB(\obM, \bE_\bh, \bV_\bu)].
            \end{split}
            \]
            Adding these two terms gives $(\bN_\bU, \bV_\bu)$ and leads to
            \[
            \big(\bR_\bu, \bV_\bu \big) = \big(\sM_\bu \sfE, \bV_\bu \big) = \big(\bN_\bU, \bV_\bu \big) - \big(\sM_\bu \osfU, \bV_\bu \big).
            \]
            \textbf{Step 2}: \textit{Magnetization equation}. We now define $\bR_\bm$ to be $\sM_\bm$ evaluated at $\sfE$, namely $(\bR_\bm, \bV_\bm) := (\sM_\bm \sfE, \bV_\bm)$. Proceeding as in Step 1, we obtain
            \[
            (\bR_\bm, \bV_\bm) = (\sM_\bm \sfE, \bV_\bm) = (\bN_\bM, \bV_\bm) - (\sM_\bm \osfU, \bV_\bm).
            \]
            \textbf{Step 3}: \textit{Constraints}. Since the constraints are linear, we easily find that
            \[
            (\div \bE_\bu, V_p) = 0, \quad \nabla E_\phi = \bE_\bh, \quad \bcurl E_\psi - \nabla E_\phi = \bE_\bm.
            \]
            This finishes the proof.
        \end{proof}
        \begin{theorem}[discrete perturbation estimate]
            The following bound on the discretization error is valid:
            \begin{equation} \label{eq:err-est}
            \begin{split}
                \sE [\be_\bu, \be_\bm, \be_\bh](t) & + \int_0^t \sD[\be_\bu, \be_\bm, \be_\bh] \ds \leq 2 \sE[\be_\bu^p, \be_\bm^p, \be_\bh^p](t) \\
                & + 2 \int_0^t \sD[\be_\bu^p, \be_\bm^p, \be_\bh^p] \ds + 2 (\sE_0 + \sG) \exp(C \|\eta\|_{L^1(I)}),
            \end{split}
            \end{equation}
            where 
            \begin{equation} \label{eq:init-consist-errs}
                \begin{split}
                    \sE_0 & := \sE[\bE_\bu, \bE_\bm, \bE_\bh](0), \qquad \sG := \int_I \frac{1}{2 \nu} \|\sM_\bu \osfU\|_{\bbU'}^2 + \frac{\chi \mu_0}{2 \tau} \|\sM_\bm \osfU\|^2 \ds
                \end{split}
            \end{equation}
            are the initial and consistency errors and
            \[
            \eta := \max\left(\mu_0 \|\nabla \bh_a\|_{\dL^\infty}, (1 + \chi) \|\nabla \obU\|_{\dL^\infty}, \mu_0 \|\nabla \obH\|_{\dL^\infty}, (1 + \chi + \mu_0) \|\nabla \obM\|_{\dL^\infty} \right).
            \]
        \end{theorem}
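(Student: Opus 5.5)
The plan is to split the discretization error as $\sfe = \sfe^p - \sfE$ and reduce everything to an energy estimate for the discrete error $\sfE$. Since $\sE$ and $\sD$ are quadratic forms, $\sE[\sfe] \le 2\sE[\sfe^p] + 2\sE[\sfE]$, and similarly for $\sD$. Therefore it suffices to show
\[
\sE[\bE_\bu,\bE_\bm,\bE_\bh](t) + \int_0^t \sD[\bE_\bu,\bE_\bm,\bE_\bh]\ds \le (\sE_0+\sG)\exp(C\|\eta\|_{L^1(I)}).
\]
For this I would mimic the proof of Lemma \ref{lem:discr-energy} on the error equation \eqref{eq:err-eq}, testing with $\bV_\bu=\bE_\bu\in\bbU$ and $\bV_\bm = \frac{\mu_0}{\chi}(\bE_\bm-\chi\bE_\bh)$. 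This test function is admissible because $\bE_\bm,\bE_\bh\in\nabla\bbX_\#\oplus\bcurl\bbX_0$.

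\textbf{Left-hand side.} Since $\div\obU=0$ and $\div\bU=0$ by (\textbf{A2}), we get $\div\bE_\bu=0$, so \eqref{eq:skew-u} holds for $\bE_\bu$. Also $\bE_\bh=\nabla E_\phi$ with $E_\phi\in\bbX\subseteq H^2$, so \eqref{eq:sym} holds for $\bE_\bh$. Finally $\bE_\bm+\bE_\bh=\bcurl E_\psi$ with $E_\psi=0$ on $\pa\Omega$.

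With these, the left-hand sides reproduce exactly the discrete energy law: the Kelvin term $-\mu_0\cB(\bE_\bm,\bE_\bh,\bE_\bu)$ cancels with the convective term $-\mu_0\cB(\bE_\bu,\bE_\bm,\bE_\bh)$, and $(\pa_t\bE_\bm,\bE_\bh)=-\frac12\frac{\d}{\dt}\|\bE_\bh\|^2$. This gives $\frac{\d}{\dt}\sE[\sfE]+2\sD[\sfE]$ plus the data term $-\mu_0\cB(\bE_\bm,\bh_a,\bE_\bu)$. That term is bounded by $\mu_0\|\nabla\bh_a\|_{\dL^\infty}\|\bE_\bm\|\|\bE_\bu\|\lesssim \eta\,\sE[\sfE]$.

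\textbf{Right-hand side: consistency residuals.} For these I use duality and Young's inequality:
\[
(\sM_\bu\osfU,\bE_\bu)\le \tfrac{1}{2\nu}\|\sM_\bu\osfU\|_{\bbU'}^2+\tfrac\nu2\|\nabla\bE_\bu\|^2,
\]
\[
\tfrac{\mu_0}{\chi}(\sM_\bm\osfU,\bE_\bm-\chi\bE_\bh)\le \tfrac{\chi\mu_0}{2\tau}\|\sM_\bm\osfU\|^2+\tfrac{\mu_0}{2\tau\chi}\|\bE_\bm-\chi\bE_\bh\|^2 .
\]
Half of each dissipation term is absorbed on the left. This is precisely why $\sG$ carries these constants.

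\textbf{Right-hand side: nonlinear terms (main obstacle).} Every term in $\bN_\bU,\bN_\bM$ must be bounded by $\eta\,\sE[\sfE]$ using only $L^\infty$ norms of gradients of the projections. Terms that differentiate a projection are straightforward:
\[
\cB(\bE_\bu,\obU,\bE_\bu),\quad \cB(\bE_\bm,\obH,\bE_\bu),\quad \cB(\bE_\bu,\obM,\bE_\bm-\chi\bE_\bh)
\]
are bounded by $\|\nabla\obU\|_{\dL^\infty}$, $\|\nabla\obH\|_{\dL^\infty}$ and $\|\nabla\obM\|_{\dL^\infty}$ times products of $L^2$ norms of errors. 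The term $\cB(\obU,\bE_\bu,\bE_\bu)$ vanishes by skew-symmetry since $\div\obU=0$.

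The dangerous terms place a derivative on an error, namely $\mu_0\cB(\obM,\bE_\bh,\bE_\bu)$ and $\cB(\obU,\bE_\bm,\bE_\bm-\chi\bE_\bh)$. For the latter, $\cB(\obU,\bE_\bm,\bE_\bm)=0$ by skew-symmetry. The remaining piece $-\chi\cB(\obU,\bE_\bm,\bE_\bh)$ is integrated by parts onto $\obU$: it equals $\chi\cB(\obU,\bE_\bh,\bE_\bm)$, and by \eqref{eq:sym} this becomes $\chi\cB(\bE_\bm,\bE_\bh,\obU)$. This still differentiates $\bE_\bh$, so I would then write $\bE_\bm=(\bE_\bm+\bE_\bh)-\bE_\bh$:
\begin{itemize}
\item With the divergence-free field $\bE_\bm+\bE_\bh$, which vanishes normally on $\pa\Omega$, skew-symmetry \eqref{eq:skew-m+h} moves the derivative onto $\obU$.
\item The piece $\cB(\bE_\bh,\bE_\bh,\obU)$ equals $\frac12\int\obU\cdot\nabla|\bE_\bh|^2$, and integrating by parts moves the derivative onto $\obU$ again.
\end{itemize}
The term $\mu_0\cB(\obM,\bE_\bh,\bE_\bu)$ is handled the same way. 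Symmetry gives $\cB(\bE_\bu,\bE_\bh,\obM)$, and skew-symmetry with $\div\bE_\bu=0$ gives $-\cB(\bE_\bu,\obM,\bE_\bh)$, which is controlled by $\|\nabla\obM\|_{\dL^\infty}$. I expect the bookkeeping of these manipulations (possibly using $\div\obM=-\div\obH$) to yield exactly the constants $(1+\chi)$ and $(1+\chi+\mu_0)$ appearing in $\eta$. Weighted Young's inequalities then convert each product $\|\bE_\cdot\|\|\bE_\cdot\|$ into a multiple of $\sE[\sfE]$, using that $\sE$ weights $\bE_\bm$ by $\mu_0/\chi$.

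Collecting these bounds gives
\[
\frac{\d}{\dt}\sE[\sfE]+\sD[\sfE]\le C\eta\,\sE[\sfE]+\tfrac{1}{2\nu}\|\sM_\bu\osfU\|_{\bbU'}^2+\tfrac{\chi\mu_0}{2\tau}\|\sM_\bm\osfU\|^2 .
\]
Integrating in time and applying Gr\"onwall's inequality yields the bound by $(\sE_0+\sG)\exp(C\|\eta\|_{L^1(I)})$, and the triangle-inequality split from the first paragraph then gives \eqref{eq:err-est}.
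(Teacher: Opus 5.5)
Your proposal is correct and follows the paper's proof essentially step for step. You test the error equation with $\bE_\bu$ and $\frac{\mu_0}{\chi}(\bE_\bm-\chi\bE_\bh)$, move every derivative onto $\obU,\obH,\obM$ via \eqref{eq:skew-u}, \eqref{eq:sym} and \eqref{eq:skew-m+h}, absorb the consistency residuals by Young, and close with Gr\"onwall and $\sfe=\sfe^p-\sfE$. Your splitting $\bE_\bm=(\bE_\bm+\bE_\bh)-\bE_\bh$ is just the paper's insertion of $\cB(\obU,\bE_\bh,\bE_\bh)=0$ done in a different order, and the extra piece $\cB(\bE_\bh,\bE_\bh,\obU)$ in fact vanishes.

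One small caveat concerns your Young inequality for the magnetization residual. With the correct weight $\frac{\mu_0}{\chi}$ it only holds when $\tau\le\chi$; in general it yields $\frac{\tau\mu_0}{2\chi}\|\sM_\bm\osfU\|^2$. The constant $\frac{\chi\mu_0}{2\tau}$ in $\sG$ comes from the paper writing that weight as $\frac{\mu_0}{\tau}$, so this is a constant slip inherited from the statement, not a flaw in your method.
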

        \begin{proof}
            We see from \eqref{eq:err-eq} that the equations $\sM_\bu \sfE$ and $\sM_\bm \sfE$ for the error $\sfE$ contain the right-hand sides $\bR_\bu$ and $\bR_\bm$. They, together with the nonlinear Kelvin force, form the right-hand side of the discrete energy law \eqref{eq:en-law-pf-1}, namely
            \[
            \begin{split}
                \frac{\d}{\dt} \sE[\bE_\bu, \bE_\bm, \bE_\bh] + 2 \sD[\bE_\bu, \bE_\bm, \bE_\bh] & = \mu_0 \cB(\bE_\bm, \bh_a, \bE_\bu) \\
                & \quad + \big(\bR_\bu, \bE_\bu \big) + \frac{\mu_0}{\tau} \big(\bR_\bm, \bE_\bm - \chi \bE_\bh \big).
            \end{split}
            \]
            According to the definition \eqref{eq:residuals}, we write
            \[
            \begin{split}
                \big(\bR_\bu, \bE_\bu \big) + \frac{\mu_0}{\tau} \big(\bR_\bm, \bE_\bm - \chi \bE_\bh \big) & = \big(\bN_\bU, \bE_\bu \big) + \frac{\mu_0}{\tau} \big(\bN_\bM, \bE_\bm - \chi \bE_\bh \big) \\
                & \quad - \big(\sM_\bu \osfU, \bE_\bu \big) - \frac{\mu_0}{\tau} \big(\sM_\bm \osfU, \bE_\bm - \chi \bE_\bh \big).
            \end{split}
            \]
            We now manipulate $\bR_\bu, \bR_\bm$ separately, trying to move all derivatives to the projections $\osfU$. This amounts to moving all terms with overlines to the second argument of $\cB(\cdot, \cdot, \cdot)$, which is the one being differentiated. The other two arguments do not have derivatives. \smallskip

            \noindent \textit{Rewriting $\big(\bN_\bU, \bE_\bu \big)$}. We first use \eqref{eq:skew-u} to deduce $\cB(\obU, \bE_\bu, \bE_\bu) = 0$. Then \eqref{eq:bN} implies
            \[
            \begin{split}
                \big(\bN_\bU, \bE_\bu \big) & = - \cB(\bE_\bu, \obU, \bE_\bu) + \mu_0 \cB(\bE_\bm, \obH, \bE_\bu) + \mu_0 \cB(\obM, \bE_\bh, \bE_\bu).
            \end{split}
            \]
            Using \eqref{eq:sym} and \eqref{eq:skew-u}, we rewrite the last term above as
            \[
            \cB(\obM, \bE_\bh, \bE_\bu) = \cB(\bE_\bu, \bE_\bh, \obM) = - \cB(\bE_\bu, \obM, \bE_\bh).
            \]
            Therefore,
            \[
            \big(\bN_\bU, \bE_\bu \big) = - \cB(\bE_\bu, \obU, \bE_\bu) + \mu_0 \cB(\bE_\bm, \obH, \bE_\bu) - \mu_0 \cB(\bE_\bu, \obM, \bE_\bh).
            \]
            \textit{Rewriting $\big(\bN_\bM, \bE_\bm - \chi \bE_\bh \big)$}. We use the definition of $\bN_\bM$ and expand terms:
            \[
            \begin{split}
                \big(\bN_\bM, \bE_\bm - \chi \bE_\bh \big) & = - \cB(\obU, \bE_\bm, \bE_\bm - \chi \bE_\bh) - \cB(\bE_\bu, \obM, \bE_\bm - \chi \bE_\bh) \\
                & = \chi \cB(\obU, \bE_\bm, \bE_\bh) - \cB(\bE_\bu, \obM, \bE_\bm) + \chi \cB(\bE_\bu, \obM, \bE_\bh),
            \end{split}
            \]
            because \eqref{eq:skew-u} implies $\cB(\obU, \bE_\bm, \bE_\bm) = 0$. We now focus our attention on $\cB(\obU, \bE_\bm, \bE_\bh)$. We first use \eqref{eq:skew-u} to add $\cB(\obU, \bE_\bh, \bE_\bh) = 0$, and then use \eqref{eq:skew-u} again to swap the second and third arguments as follows:
            \[
                \cB(\obU, \bE_\bm, \bE_\bh) = \cB(\obU, \bE_\bm + \bE_\bh, \bE_\bh) = - \cB(\obU, \bE_\bh, \bE_\bm + \bE_\bh).
            \]
            We next employ \eqref{eq:sym} followed by \eqref{eq:skew-m+h} to get
            \[
            \cB(\obU, \bE_\bm, \bE_\bh) = - \cB(\bE_\bm + \bE_\bh, \bE_\bh, \obU) = \cB(\bE_\bm + \bE_\bh, \obU, \bE_\bh).
            \]
            Therefore,
            \[
            \big(\bN_\bM, \bE_\bm - \chi \bE_\bh \big) = \chi \cB(\bE_\bm + \bE_\bh, \obU, \bE_\bh) - \cB(\bE_\bu, \obM, \bE_\bm) + \chi \cB(\bE_\bu, \obM, \bE_\bh).
            \]
            Altogether, we obtain
            \[
            \begin{split}
                \frac{\d}{\dt} \sE[\bE_\bu, \bE_\bm, \bE_\bh] & + 2 \sD[\bE_\bu, \bE_\bm, \bE_\bh] \\
                & = \mu_0 \cB(\bE_\bm, \bh_a, \bE_\bu) - \cB(\bE_\bu, \obU, \bE_\bu) + \mu_0 \cB(\bE_\bm, \obH, \bE_\bu) \\
                & \quad - \mu_0 \cB(\bE_\bu, \obM, \bE_\bh) + \chi \cB(\bE_\bm + \bE_\bh, \obU, \bE_\bh) - \cB(\bE_\bu, \obM, \bE_\bm) \\
                & \quad + \chi \cB(\bE_\bu, \obM, \bE_\bh) - \big(\sM_\bu \osfU, \bE_\bu \big) - \frac{\mu_0}{\tau} \big(\sM_\bm \osfU, \bE_\bm - \chi \bE_\bh \big).
            \end{split}
            \]
            We now estimate each term on the right-hand side. The terms involving the trilinear form $\cB$ will all be estimated as follows: $|\cB(\ba, \bb, \bc)| \leq \|\nabla \bb\|_{\dL^\infty} \|\ba\| \|\bc\|$.
            % \[
            % |\cB(\ba, \bb, \bc)| \leq \|\nabla \bb\|_{\dL^\infty} \|\ba\| \|\bc\|.
            % \]
            Consistency errors are estimated as
            \[
            \begin{split}
                |\big(\sM_\bu \osfU, \bE_\bu \big)| & \leq \frac{1}{2 \nu} \|\sM_\bu \osfU\|_{\bbU'}^2 + \frac{\nu}{2} \|\nabla \bE_\bu\|^2, \\
                \frac{\mu_0}{\tau} |\big(\sM_\bm \osfU, \bE_\bm - \chi \bE_\bh \big)| & \leq \frac{\chi \mu_0}{2 \tau} \|\sM_\bm \osfU\|^2 + \frac{\mu_0}{2 \chi \tau} \|\bE_\bm - \chi \bE_\bh\|^2.
            \end{split}
            \]
            Altogether, we end up with
            \[
            \begin{split}
                \frac{\d}{\dt} \sE[\bE_\bu, \bE_\bm, \bE_\bh] & + \sD[\bE_\bu, \bE_\bm, \bE_\bh]
                \leq \mu_0 \|\nabla \bh_a\|_{\dL^\infty} \|\bE_\bm\| \|\bE_\bu\| + \|\nabla \obU\|_{\dL^\infty} \|\bE_\bu\|^2 \\
                & \quad + \mu_0 \|\nabla \obH\|_{\dL^\infty} \|\bE_\bm\| \|\bE_\bu\| + \mu_0 \|\nabla \obM\|_{\dL^\infty} \|\bE_\bu\| \|\bE_\bh\| \\
                & \quad + \chi \|\nabla \obU\|_{\dL^\infty} (\|\bE_\bm\| + \|\bE_\bh\|) \|\bE_\bh\| + \|\nabla \obM\|_{\dL^\infty} \|\bE_\bu\| \|\bE_\bm\| \\
                & \quad + \chi \|\nabla \obM\|_{\dL^\infty} \|\bE_\bu\| \|\bE_\bh\| + \frac{1}{2 \nu} \|\sM_\bu \osfU\|_{\bbU'}^2 + \frac{\chi \mu_0}{2 \tau} \|\sM_\bm \osfU\|^2 \\
                & \leq \eta \left( 3 \|\bE_\bm\| \|\bE_\bu\| + \|\bE_\bu\|^2 + 2 \|\bE_\bu\| \|\bE_\bh\| + \|\bE_\bm\| \|\bE_\bh\| + \|\bE_\bh\|^2 \right) \\
                & \quad + \frac{1}{2 \nu} \|\sM_\bu \osfU\|_{\bbU'}^2 + \frac{\chi \mu_0}{2 \tau} \|\sM_\bm \osfU\|^2.
            \end{split}
            \]
            where $\eta = \max(\mu_0 \|\nabla \bh_a\|_{\dL^\infty}, (1 + \chi) \|\nabla \obU\|_{\dL^\infty}, \mu_0 \|\nabla \obH\|_{\dL^\infty}, (1 + \chi + \mu_0) \|\nabla \obM\|_{\dL^\infty})$. Thus,
            \[
            \begin{split}
                \frac{\d}{\dt} \sE[\bE_\bu, \bE_\bm, \bE_\bh] + \sD[\bE_\bu, \bE_\bm, \bE_\bh] & \leq 4 \eta \left( \|\bE_\bm\|^2 + \|\bE_\bu\|^2 + \|\bE_\bh\|^2 \right) \\
                & \quad  + \frac{1}{2 \nu} \|\sM_\bu \osfU\|_{\bbU'}^2 + \frac{\chi \mu_0}{2 \tau} \|\sM_\bm \osfU\|^2.
            \end{split}
            \]
            Integrating in time yields
            \[
            \begin{split}
                \sE& [\bE_\bu, \bE_\bm, \bE_\bh](t) + \int_0^t \sD[\bE_\bu, \bE_\bm, \bE_\bh] \ds \\
                & \leq 4 \int_0^t \eta(s) \left( \|\bE_\bm\|^2 + \|\bE_\bu\|^2 + \|\bE_\bh\|^2 \right) \ds + \int_I \frac{1}{2 \nu} \|\sM_\bu \osfU\|_{\bbU'}^2 + \frac{\chi \mu_0}{2 \tau} \|\sM_\bm \osfU\|^2 \ds.
            \end{split}
            \]
            Using Gr\"onwall's inequality, we obtain
            \[
            \begin{split}
                & \sE [\bE_\bu,\bE_\bm, \bE_\bh](t) + \int_0^t \sD[\bE_\bu, \bE_\bm, \bE_\bh] \ds \\
                & \leq \left( \sE[\bE_\bu, \bE_\bm, \bE_\bh](0) + \int_I \frac{1}{2 \nu} \|\sM_\bu \osfU\|_{\bbU'}^2 + \frac{\chi \mu_0}{2 \tau} \|\sM_\bm \osfU\|^2 \ds \right) \exp(C \|\eta\|_{L^1(I)}),
            \end{split}
            \]
            where $C = C(\mu_0, \chi, \tau)$. Finally, using $\sfe = \sfe^p + \sfE$ and triangle inequality, we deduce the asserted result.
        \end{proof}
        Estimate \eqref{eq:err-est} expresses the Galerkin error $(\be_\bu, \be_\bm, \be_\bh)$ in terms of the initial error $\sE_0$ and the consistency error $\sG$ of \eqref{eq:init-consist-errs}. Our next task is to quantify this error provided $\sfu$ is a smooth solution.

    \subsection{Error estimate using the Stokes projection} \label{sec:err-est-stokes-proj}
        We first introduce suitable projections into the discrete spaces. We start with the \textit{Stokes projection} $\cS: \bH^1_0(\Omega) \times L^2_\#(\Omega) \to \bbU \times \bbP$: $(\obU, \oP) := \cS(\bu, p)$ satisfy
        \[
        \nu \big(\nabla [\obU - \bu], \nabla \bV_\bu \big) - \big(\oP - p, \div \bV_\bu \big) - \big(\div [\obU - \bu], V_p \big) = 0, \quad \forall (\bV_\bu, V_p) \in \bbU \times \bbP.
        \]
        Due to (\textbf{A2}), if $\div \bu = 0$, then $\div \obU = 0$. Let $H^1_\#(\Omega) := H^1(\Omega) \cap L^2_\#(\Omega)$. We next define the \textit{Ritz projections} $\cR_\# : H^1_\#(\Omega) \to \bbX_\#, \cR_0 : H^1_0(\Omega) \to \bbX_0$: 
        \[
        \begin{split}
            \oPhi & := \cR_\# \phi : \quad \big(\nabla [\oPhi - \phi], \nabla V_\phi \big) = 0, \qquad \oPsi := \cR_0 \psi : \quad \big(\nabla [\oPsi - \psi], \nabla V_\psi \big) = 0,
        \end{split}
        \]
        for all $V_\phi \in \bbX_\#$ and all $V_\psi \in \bbX_0$. We finally set 
        \[
        \obH = \nabla \oPhi \in \nabla \bbX_\#, \qquad \obM = \bcurl \oPsi - \nabla \oPhi \in \nabla \bbX_\# \oplus \bcurl \bbX_0.
        \]
        We further choose the initial conditions to be $\bU_0 = \obU(0)$, $\bH_0 = \obH(0)$, $\bM_0 = \obM(0)$, whence the initial error $\sE_0 = 0$. The error expression then becomes
        \[
        \begin{split}
            & \sE [\be_\bu, \be_\bm, \be_\bh](t) + \int_0^t \sD[\be_\bu, \be_\bm, \be_\bh] \ds = 2 \sE[\be_\bu^p, \be_\bm^p, \be_\bh^p](t) \\
            & \quad + 2 \int_0^t \sD[\be_\bu^p, \be_\bm^p, \be_\bh^p] \ds + 2 \left(\int_I \frac{1}{2 \nu} \|\bD_\bu\|^2 + \frac{\chi \mu_0}{2 \tau} \|\bD_\bm\|^2 \ds \right) \exp(C \|\eta\|_{L^1(I)}),
        \end{split}
        \]
        where defects $\bD_\bu, \bD_\bm$ are given as
        \[
        \begin{split}
            \bD_\bu & = \pa_t (\bu - \obU) + (\bu \cdot \nabla) \bu - (\obU \cdot \nabla) \obU - \mu_0 (\bm \cdot \nabla) (\bh + \bh_a) + \mu_0 (\obM \cdot \nabla) (\obH + \bh_a), \\
            \bD_\bm & = \pa_t (\bm - \obM) + (\bu \cdot \nabla) \bm - (\obU \cdot \nabla) \obM - \tau^{-1} (\bm - \chi \bh) + \tau^{-1} (\obM - \chi \obH),
        \end{split}
        \]
        because $(\bu, \bm, \bh)$ satisfy the momentum and magnetization equations. Moreover, the diffusion term corresponding to $\nu (\nabla \obU, \nabla \bV_\bu)$ and the pressure term $(\oP, \div \bV_\bu)$ are gone thanks to using the Stokes projection. This is an instance of the standard Wheeler's argument for parabolic equations \cite{Wheeler1973}. Estimating the two defect terms yields
        \[
        \begin{split}
            \|\bD_\bu\| & \leq \|\pa_t \be_\bu^p\| + \|\be_\bu^p\| \|\nabla \obU\|_{\dL^\infty(\Omega)} + \|\bu\|_{\bL^\infty(\Omega)} \|\nabla \be_\bu^p\| \\
            & \quad + \mu_0 \|\be_\bm^p\| \|\nabla (\obH + \bh_a)\|_{\dL^\infty(\Omega)} + \mu_0 \|\bm\|_{\bL^\infty(\Omega)} \|\nabla \be_\bh^p\| \\
            & \leq C_1 (\|\pa_t \be_\bu^p\| + \|\nabla \be_\bu^p\| + \mu_0 \|\be_\bm^p\| + \mu_0 \|\nabla \be_\bh^p\|),
        \end{split}
        \]
        and
        \[
        \begin{split}
            \|\bD_\bm\| & \leq \|\pa_t \be_\bm^p\| + \|\be_\bu^p\| \|\nabla \obM\|_{\dL^\infty(\Omega)} + \|\bu\|_{\bL^\infty(\Omega)} \|\nabla \be_\bm^p\| + \tau^{-1} \|\be_\bm^p\| + \chi \tau^{-1} \|\be_\bh^p\| \\
            & \leq C_2 (\|\pa_t \be_\bm^p\| + \|\be_\bu^p\| + \|\nabla \be_\bm^p\| + \tau^{-1} \|\be_\bm^p\| + \chi \tau^{-1} \|\be_\bh^p\|),
        \end{split}
        \]
        where constants $C_1, C_2 > 0$ depend on the $L^\infty$ and $W^{1,\infty}$ operator norms of the Stokes projection, the $W^{1,\infty}$ and $W^{2,\infty}$-operator norms of the Ritz projections, and the $W^{1,\infty}$-norms of the exact solution $(\bu, \bm, \bh)$ and data $\bh_a$. Altogether, we deduce the following abstract error estimate that only has projection errors on the right-hand side:
        \begin{equation} \label{eq:err-est-stokes-proj}
            \begin{split}
                \sE & [\be_\bu, \be_\bm, \be_\bh](t) + \int_0^t \sD[\be_\bu, \be_\bm, \be_\bh] \ds \\
                & \leq 2 \sE[\be_\bu^p, \be_\bm^p, \be_\bh^p](t) + 2 \int_0^t \sD[\be_\bu^p, \be_\bm^p, \be_\bh^p] \ds \\
                & \quad + C_\eta \frac{4 C_1^2}{\nu} \int_I (\|\pa_t \be_\bu^p\|^2 + \|\nabla \be_\bu^p\|^2 + \mu_0^2 \|\be_\bm^p\|^2 + \mu_0^2 \|\nabla \be_\bh^p\|^2) \ds \\
                & \quad + C_\eta \frac{5 C_2^2 \chi \mu_0}{\tau} \int_I (\|\pa_t \be_\bm^p\|^2 + \|\be_\bu^p\|^2 + \|\nabla \be_\bm^p\|^2 + \tau^{-2} \|\be_\bm^p\|^2 + \chi \tau^{-2} \|\be_\bh^p\|^2) \ds,
            \end{split}
        \end{equation}
        where $C_\eta := \exp(C \|\eta\|_{L^1(I)})$.

    \subsection{Examples of discretization schemes} \label{sec:err-analysis}
        Assume that $\Omega$ is convex and polygonal. Let $\{\cT_h\}_h$ be a family of shape-regular and quasi-uniform triangulations of $\Omega$ into unions of triangles of mesh size $h$.        
        For the velocity-pressure pair $(\bbU, \bbP)$, one can take the Scott--Vogelius \cite{ScottVogelius1985} or one of the Guzm\'an--Neilan \cite{GuzmanNeilan2018} elements. For the space $\bbX$, one can use the Argyris \cite{Argyris1968}, the Hsieh--Clough--Tocher \cite{CloughTocher1965} or the reduced Hsieh--Clough--Tocher \cite{Ciarlet1978-rHCT} elements, for instance.
        % To simplify the analysis, we assume that $\bbU$ consists of piecewise polynomials of degree $k-1$, where $k \geq 2$ is the polynomial order of $\bbX$.
        To simplify this analysis, we assume that in terms of approximation properties, $\bbU$ behaves like the space of piecewise polynomials of degree $\leq k-1$ and $\bbX$ like polynomials of degree $\leq k$.
        The following results are nonstandard and will, for the purposes of this work, be stated as assumptions.
        
        \begin{assump}[stability of projections] \label{assump:stab}
            There exist constants $C_3, C_4, C_5 > 0$ independent of mesh size $h$ such that
            \[
            \sup_{0 \neq v \in W^{2, \infty}(\Omega) \cap L^2_\#(\Omega)} \frac{\|\cR_\# v\|_{W^{2, \infty}(\Omega)}}{\|v\|_{W^{2,\infty}(\Omega)}} \leq C_3, \quad \sup_{0 \neq v \in W^{2, \infty}(\Omega) \cap H^1_0(\Omega)} \frac{\|\cR_0 v\|_{W^{2, \infty}(\Omega)}}{\|v\|_{W^{2, \infty}(\Omega)}} \leq C_4,
            \]
            \[
            \sup_{\bv \in \bW^{1,\infty}(\Omega) \cap \bH^1_0(\Omega)} \frac{\|\nabla \cS \bv\|_{\dL^\infty(\Omega)}}{\|\nabla \bv\|_{\dL^\infty(\Omega)}} \leq C_5.
            \]
        \end{assump}
        $W^{2,\infty}$-stability of $\cR_\#$ and $\cR_0$ can be deduced from the $W^{1,\infty}$-stability and global inverse inequalities. $\bW^{1,\infty}$-stability of $\cS$ is a delicate matter that has been addressed for 2D and 3D, respectively, in \cite{DuranNochetto1990, GiraultNochettoScott2015} for many popular inf-sup stable velocity-pressure pairs, which, however, do not directly include exactly divergence-free variants needed for our method.

        \begin{assump}[approximability of projections] \label{assump:approx}
            There exist constants $C_6, C_7, C_8 > 0$ independent of mesh size $h$ such that for all $v \in H^{k+1}(\Omega) \cap L^2_\#(\Omega)$, $w \in H^{k+1}(\Omega) \cap H^1_0(\Omega)$, $\bv \in \bH^k_0(\Omega)$, and $k \in \mathbb{Z}_{\geq 2}$,
            \[
            \begin{split}
                \|v - \cR_\# v\| + h \|\nabla (v - \cR_\#) v\| + h^2 \|\nabla \nabla (v - \cR_\# v)\| & \leq C_6 h^{k+1} |v|_{H^{k+1}(\Omega)}, \\
                \|w - \cR_0 w\| + h \|\nabla (w - \cR_0) w\| + h^2 \|\nabla \nabla (w - \cR_0 w)\| & \leq C_7 h^{k+1} |w|_{H^{k+1}(\Omega)}, \\
                \|\bv - \cS \bv\| + h \|\nabla (\bv - \cS) \bv\| & \leq C_8 h^k |\bv|_{\bH^k(\Omega)}.
            \end{split}
            \]
        \end{assump}
        The error estimate for $\cR_\#$ and $\cR_0$ can be derived using standard arguments. In turn, the error estimate for $\cS$ may require further mesh restrictions, depending on the choice of velocity-pressure pair.
        
        \begin{theorem}[error estimate] \label{thm:err-est-final}
            Under Assumptions \ref{assump:stab} and \ref{assump:approx}, the following error estimate is valid for all $t \in (0, T]$ provided the exact solution is sufficiently smooth:
            \[
            \sE [\be_\bu, \be_\bm, \be_\bh](t) + \int_0^t \sD[\be_\bu, \be_\bm, \be_\bh] \ds \leq C h^{2(k-1)} \Lambda(\bu, \bm, \bh), 
            \]
            where
            \[
            \begin{split}
                & \qquad \qquad \qquad \quad \Lambda(\bu, \bm, \bh) := \|\bu\|_Y^2 + \|\bm\|_Y^2 + \|\bh\|_X^2, \\
                \|\cdot\|_{X}^2 & := \|\cdot\|_{L^\infty(I; \bH^{k-1}(\Omega))}^2 + \|\cdot\|_{L^2(I; \bH^k(\Omega))}^2, \ \  \|\cdot\|_Y^2 := \|\cdot\|_X^2 + \|\pa_t \cdot\|_{L^2(I; \bH^{k-1}(\Omega))}^2,
            \end{split}
            \]
            $k \in \mathbb{Z}_{\geq 2}$, and constant $C > 0$ depends on parameters $\mu_0, \chi, \tau, \nu$, final time $T$, constants from Assumptions \ref{assump:stab}, \ref{assump:approx}, and the $L^\infty(I; \bW^{1,\infty}(\Omega))$-norms of $\bh_a, \bu$, $\bh$, and $\bm$.
        \end{theorem}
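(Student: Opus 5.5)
The estimate will be obtained by specializing the abstract bound \eqref{eq:err-est-stokes-proj} to the concrete projections $\cS$, $\cR_\#$, $\cR_0$. Every term on its right-hand side is then bounded with Assumption \ref{assump:approx}, while Assumption \ref{assump:stab} keeps the constants $C_\eta$, $C_1$, $C_2$ uniform in $h$. Only the exponent $2(k-1)$ needs to be tracked. Since $\bh = \nabla\phi$ and $\bm = \bcurl\psi - \nabla\phi$, the projection errors satisfy $\be^p_\bh = \nabla(\phi - \cR_\#\phi)$ and $\be^p_\bm = \bcurl(\psi - \cR_0\psi) - \nabla(\phi - \cR_\#\phi)$. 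Hence one derivative of $\be^p_\bh$ or $\be^p_\bm$ costs two derivatives of the potential errors.

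\textbf{Step 1: uniformity of $C_\eta$, $C_1$, $C_2$.} First, $\|\nabla\obU\|_{\dL^\infty} \le C_5\|\nabla\bu\|_{\dL^\infty}$. Next, $\|\nabla\obH\|_{\dL^\infty} \le \|\cR_\#\phi\|_{W^{2,\infty}} \le C_3\|\phi\|_{W^{2,\infty}}$, and similarly $\|\nabla\obM\|_{\dL^\infty} \le C_3\|\phi\|_{W^{2,\infty}} + C_4\|\psi\|_{W^{2,\infty}}$. The norms $\|\phi\|_{W^{2,\infty}}$ and $\|\psi\|_{W^{2,\infty}}$ are controlled by $\|\bh\|_{W^{1,\infty}}$ and $\|\bm\|_{W^{1,\infty}}$ together with Poincaré-type bounds for the potentials. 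Thus $\|\eta\|_{L^1(I)} \le T\sup_t\eta$ is bounded independently of $h$ in terms of the $L^\infty(I;\bW^{1,\infty})$-norms listed in the statement, and $C_\eta$ is uniform. The same bounds make $C_1$ and $C_2$ uniform.

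\textbf{Step 2: projection terms.} For the Stokes part, $\|\be_\bu^p\| + h\|\nabla\be^p_\bu\| \le C_8h^k|\bu|_{\bH^k}$. Thus $\|\nabla\be^p_\bu\|^2 \lesssim h^{2(k-1)}\|\bu\|^2_{\bH^k}$, and integrating in time gives the $L^2(I;\bH^k)$ part of $\|\bu\|_X$. At a fixed time $t$, the term $\|\be^p_\bu(t)\|^2$ needs only $\bH^{k-1}$ regularity: applying the same estimate with index $k-1$ gives $h^{2(k-1)}\|\bu\|^2_{L^\infty(I;\bH^{k-1})}$. Because $\cS$ is linear and time-independent, it commutes with $\pa_t$, so $\|\pa_t\be^p_\bu\| \le C_8h^{k-1}|\pa_t\bu|_{\bH^{k-1}}$.

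For $\bh$ and $\bm$, Assumption \ref{assump:approx} yields $\|\be^p_\bh\| \le Ch^k|\phi|_{H^{k+1}}$ and $\|\nabla\be^p_\bh\| \le Ch^{k-1}|\phi|_{H^{k+1}}$, with $|\phi|_{H^{k+1}} \lesssim \|\bh\|_{\bH^k}$. The analogous bounds for $\be^p_\bm$ use $|\psi|_{H^{k+1}} + |\phi|_{H^{k+1}} \lesssim \|\bm\|_{\bH^k} + \|\bh\|_{\bH^k}$, obtained from $\bcurl\psi = \bm + \bh$. The time derivatives $\pa_t\be^p_\bm$ are handled by commuting $\pa_t$ with $\cR_0$ and $\cR_\#$, which requires $\pa_t\bm$ and $\pa_t\bh$ in $\bH^{k-1}$. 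The dissipation terms $\sD[\be^p]$ are bounded by $\nu\|\nabla\be^p_\bu\|^2 + \|\be^p_\bm\|^2 + \|\be^p_\bh\|^2$ and are handled in the same way.

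\textbf{Step 3: summation.} Every contribution is $O(h^{2(k-1)})$ times squares of $\|\bu\|_Y$, $\|\bm\|_Y$ and $\|\bh\|_X$, so summing the bounds gives the claim.

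\textbf{Main obstacle.} The delicate point is the time derivative of $\bh$. The term $\|\pa_t\be^p_\bm\|$ involves $\pa_t(\phi - \cR_\#\phi)$ through $\bm$, which seems to require $\pa_t\bh$, whereas $\Lambda$ contains only $\|\bh\|_X$. My first attempt is to use $\pa_t\nabla\phi = \bcurl\pa_t\psi - \pa_t\bm$ and the $L^2$-orthogonality of gradients and curls. This bounds $\|\pa_t\bh\|_{\bH^{k-1}}$, and likewise $\|\pa_t\bcurl\psi\|_{\bH^{k-1}}$, by $\|\pa_t\bm\|_{\bH^{k-1}}$, via elliptic regularity of the div–curl system on the convex domain. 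If this works, the $\pa_t$-terms of $\bh$ are absorbed into $\|\bm\|_Y$. If it fails, I would let the constant $C$ depend on the smooth solution instead.
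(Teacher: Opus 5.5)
Your Steps 1--3 are the paper's proof. The paper only inserts Assumption \ref{assump:approx} into \eqref{eq:err-est-stokes-proj}, with Assumption \ref{assump:stab} tacitly making $C_\eta$, $C_1$, $C_2$ independent of $h$, and you supply the omitted details correctly. One small slip concerns the pointwise term $2\sE[\be^p_\bu,\be^p_\bm,\be^p_\bh](t)$. There you need $\|\be^p_\bh(t)\| = \|\nabla(\phi-\cR_\#\phi)\|\le Ch^{k-1}|\phi|_{H^k}$, and likewise for $\be^p_\bm$, not the $h^k|\phi|_{H^{k+1}}$ bound you quote, because $\Lambda$ controls $\bh,\bm$ only in $L^\infty(I;\bH^{k-1})$. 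For $k=2$, these lower-index bounds and $\|\bv-\cS\bv\|\le Ch|\bv|_{\bH^1}$ are not literally in Assumption \ref{assump:approx}. They follow from best approximation plus a duality argument, and the paper tacitly needs them too.

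The obstacle you flag is genuine, and the paper's one-line proof passes over it. It cannot be avoided. Since $\nabla H^1(\Omega)\perp\bcurl H^1_0(\Omega)$ in $L^2$, we have $\|\pa_t\be^p_\bm\|^2=\|\nabla(\pa_t\psi-\cR_0\pa_t\psi)\|^2+\|\nabla(\pa_t\phi-\cR_\#\pa_t\phi)\|^2$. An $O(h^{k-1})$ bound therefore requires $\pa_t\bh=\nabla\pa_t\phi\in L^2(I;\bH^{k-1})$.

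Your repair works for $k=2$. Since $\curl\bh=0$, $\pa_t\psi\in H^1_0$ solves $-\Delta\pa_t\psi=\curl\pa_t\bm$. $H^2$-regularity on the convex polygon then gives $\|\pa_t\bh\|_{\bH^1}\le\|\pa_t\psi\|_{H^2}+\|\pa_t\bm\|_{\bH^1}\le C\|\pa_t\bm\|_{\bH^1}$, so $\|\bh\|_X$ indeed suffices.

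For $k\ge 3$ it fails, because convexity of a polygon gives $H^2$- but in general not $H^3$-regularity. Already $-\Delta u=1$ with zero Dirichlet data at a right-angled corner produces the term $r^2\log r\sin 2\theta\notin H^3$. Hence $\|\pa_t\bh\|_{\bH^{k-1}}$ is not controlled by $\|\pa_t\bm\|_{\bH^{k-1}}$.

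Your fallback of letting $C$ depend on the solution contradicts the stated dependence of $C$. For $k\ge3$, what your argument (and the paper's) actually proves is the estimate with $\|\bh\|_X$ replaced by $\|\bh\|_Y$ in $\Lambda$.
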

        \begin{proof}
            We combine the abstract estimate \eqref{eq:err-est-stokes-proj} with Assumption \ref{assump:approx}.
            % We omit further details.
        \end{proof}
        The estimate of Theorem \ref{thm:err-est-final} is optimal with respect to order-regularity in space in the $L^2(I; \bH^1(\Omega))$-norm for velocity $\bu$ and polynomial degree $k-1$. Due to the nonlinearities, for $(\bh, \bm)$ the rate of convergence is one order lower than the corresponding interpolation estimate for $L^\infty(I; L^2(\Omega))$-norm and polynomial degree $k-1$ (variables $\bM$ and $\bH$ are first derivatives of elements in $\bbX$, which is of polynomial degree $k$).
        We do not know whether this rate is optimal.

\section{Computational results} \label{sec:experiment}
In this section, we present two sets of numerical experiments illustrating the performance of our 2D scheme \eqref{eq:semidiscrete}.
Our implementation uses the open-source finite element software \texttt{Firedrake} \cite{FiredrakeUserManual}.

Recall that $\{\cT_h\}_{h>0}$ denotes a family of shape-regular and quasi-uniform triangulations of $\Omega$ made of triangles $T$ and that $h = \max_{T \in \cT_h} h_T$.
For each $h>0$ we denote by $\cT_h^\b$ the barycentric refinement of $\cT_h$.
For the velocity-pressure pair $(\bbU,\bbP)$ we consider the lowest order Guzmán--Neilan element \cite[Section 6.2]{GuzmanNeilan2018}, with $\bbU = \bbU(\cT_h^b)$ consisting of piecewise continuous vector-valued polynomials of degree $\leq 1$ enriched with ``modified'' face bubbles, and $\bbP = \bbP(\cT_h^\b)$ consisting of piecewise constant scalar-valued functions.
In turn, we let $\bbX = \bbX(\cT_h)$ be the space of reduced Hsieh--Clough--Tocher functions on $\cT_h$ \cite{Ciarlet1978-rHCT}.
Accordingly, as specified in \eqref{eq:Xsubspaces}, we let $\bbX_\# := \bbX \cap L^2_\#(\Omega)$ and $\bbX_0 := \bbX \cap H^1_0(\Omega)$.
The triplet $(\bbU,\bbP,\bbX)$ satisfies the discrete inf-sup condition \eqref{eq:disc-inf-sup} and assumptions A1 and A2 from section \ref{sec:spaces}.
We expect Assumptions \ref{assump:stab} and \ref{assump:approx} to hold true, but we are unable to pinpoint a specific reference for them.

For the time discretization of system \eqref{eq:semidiscrete}, we implement an Euler scheme in which all linear terms are treated implicitly, and the nonlinear convective terms are treated semi-implicitly.
Regarding the vanishing Dirichlet condition of $\Psi \in \bbX_0$, it is well-known that, for $C^1$-conforming elements, strongly enforcing Dirichlet boundary conditions without unnecessarily restricting or locking other degrees of freedom is a rather nontrivial task \cite{KirbyMitchell2019}.
Consequently, within each time step, we resort to a Nesterov acceleration of the Uzawa--Powell algorithm \cite[Section 8]{BenziGolubLiesen2005} to ensure that $\|\Psi\|_{L^2(\partial\Omega)}$ is negligible compared to time and spatial discretization parameters.\\
\\
\noindent\textbf{Convergence test.}
In this test, we experimentally corroborate the spatial convergence rates asserted by Theorem \ref{thm:err-est-final}.
We set $\nu = 1$, $\mu_0 = 10^{-6}$, $\tau = 10^{-5}$ and $\chi = 0.1$.
The computational domain $\Omega = (0,1)^2$ is triangulated with uniform triangles of diameter $h$.
We consider $T = 0.5$ and set the time step size as $\Delta t = h^2/16$.
Within each time step, we halt the Uzawa--Powell--Nesterov inner scheme as soon as $\|\Psi\|_{L^2(\partial\Omega)} < 10^{-4} h^3$.

We consider the applied magnetic field $\bh_a = \nabla\phi_a$ given by the harmonic potential $\phi_a(t,x,y) = (t+1)^3 (x^6 - 15x^4 y^2 + 15x^2 y^4 - y^6 + 30x + 30y)$.
The manufactured solutions are given by
\begin{multline*}
\bu(t,x,y) = e^{-t} \bcurl( x^2 (1-x)^2 y^2), \qquad
p(t,x,y) = (x-1)^3 \sin(2\pi\, t\, y) - C_1(t),\\
\phi(t,x,y) = (1-t)^2 \cos(\pi x)\cos(\pi y) - C_2(t), \qquad
\psi(t,x,y) = x^3(1-x)^2 y(1-y).
\end{multline*}
where $C_1$ and $C_2$ are such that $\int_\Omega p \dx= \int_\Omega \phi \dx = 0$ for each $t \in [0,T]$.
The magnetic H-field and magnetization field are respectively given by $\bh = \nabla \phi$ and $\bm = \bcurl\psi - \nabla\phi$.
We modify the momentum and magnetization equations by incorporating appropriate forcing terms so as to ensure that the manufactured solutions satisfy the equations.
The nonzero Dirichlet condition satisfied by $\bu$ is incorporated in the system in the traditional way;
we note that this is not strictly covered by our theory.

As usual, for two consecutive meshes of sizes $h$ and $\hat h$, and corresponding generic error norms $\|\sfe(h)\|$ and $\|\sfe(\hat h)\|$, we let $\mathsf{r} = \log(\|\sfe(h)\|/\|\sfe(\hat h)\|)/\log(h/\hat h)$ be the experimental rate of convergence.
Our numerical results are summarized in Table \ref{tab:spatial_convergence}.\\
\\
\begin{table}%[hbtp]
    \centering
    \caption{
    Spatial convergence rates for the fully discrete scheme.
    The rate for $\bu$ in the $L^2(I;\bH^1)$-norm agrees with Theorem \ref{thm:err-est-final}, but the others are better than expected.
    }
    \label{tab:spatial_convergence}
    \begin{tabular}{c c c c c c c c c}
        \toprule
        $h$ & $\|\bu - \bU\|_{L^\infty(L^2)}$ & Rate & $\|\bu-\bU\|_{L^2(H^1)}$ & Rate & $\|\bh-\bH\|_{L^\infty(L^2)}$ & Rate & $\|\bm-\bM\|_{L^\infty(L^2)}$ & Rate \\
        \midrule
        0.3536 & 8.0612\texttt{E}-03 & --   & 1.6236\texttt{E}-01 & --   & 4.6415\texttt{E}-02 & --   & 4.6439\texttt{E}-02 & --   \\
        0.1768 & 2.0876\texttt{E}-03 & 1.95 & 6.8749\texttt{E}-02 & 1.24 & 1.0785\texttt{E}-02 & 2.11 & 1.0793\texttt{E}-02 & 2.11 \\
        0.0884 & 5.0670\texttt{E}-04 & 2.04 & 3.0905\texttt{E}-02 & 1.15 & 2.6159\texttt{E}-03 & 2.04 & 2.6179\texttt{E}-03 & 2.04 \\
        0.0442 & 1.2142\texttt{E}-04 & 2.06 & 1.4633\texttt{E}-02 & 1.08 & 6.4704\texttt{E}-04 & 2.02 & 6.4754\texttt{E}-04 & 2.02 \\
        0.0221 & 2.9265\texttt{E}-05 & 2.05 & 7.1297\texttt{E}-03 & 1.04 & 1.6124\texttt{E}-04 & 2.00 & 1.6136\texttt{E}-04 & 2.00 \\
        \bottomrule
    \end{tabular}
\end{table}
% \subsection{Simulation: ferrofluid pumping}
\noindent\textbf{Simulation: ferrofluid pumping.}
In this experiment we illustrate the capability of our numerical scheme of representing complex phenomena involving ferrofluids.
One of the most prominent applications of ferrofluids is pumping induced by an applied magnetic field.
Highly inspired by \cite[Section 7.2]{NochettoSalgadoTomas2016}, we consider a spatial and time dependent applied magnetic field $\bh_a$ given by the sum of a finite number of periodic pulses that travel in the direction that we would like to induce linear momentum \cite{MaoKoser2005}.
More precisely, we consider a long channel $\Omega = (0,6)\times (0,1)$ as our computational domain, partitioned with uniform triangles of size $h = \sqrt{2}/32$.
The computational time is $T = 3.65$ with uniform time steps of size $\Delta t = h^2/16$.
Physical parameters are given by $\nu = 0.1$, $\mu_0 = 10^{-6}$, $\tau = 10^{-5}$ and $\chi = 0.5$.
The magnetic potential $\phi_a$ defining the applied magnetic field $\bh_a = \nabla\phi_a$ is given by the sum of 64 dipoles with time-dependent intensity placed outside of $\Omega$.
More precisely, for $\lambda = 1$, $\kappa = 2\pi/\lambda$, $q = 5$, $f = 10$, $\omega = 2\pi f$, $\bd^{\texttt{top}} = (0,1)$, and $\bd^{\texttt{bot}} = (0,-1)$, we set 
\begin{equation*}
\textstyle
\phi_a(t,\bx)
\textstyle
= R(t) \sum\limits_{i=1}^{32} \Big[ \sin(\omega t - \kappa x_i^{\texttt{top}})^{2q} \bd^{\texttt{top}} \cdot \frac{\bx - \bx_i^{\texttt{top}}}{\|\bx - \bx_i^{\texttt{top}}\|^2}
\textstyle
+ \sin(\omega t - \kappa x_i^{\texttt{bot}})^{2q} \bd^{\texttt{bot}} \cdot \frac{\bx - \bx_i^{\texttt{bot}}}{\|\bx - \bx_i^{\texttt{bot}}\|^2} \Big];
\end{equation*}
where $(x_i^{\texttt{top}}, y_i^{\texttt{top}}) = \bx_i^{\texttt{top}} = (2 + \frac{2}{31}(i-1), 1.05)$ and $(x_i^{\texttt{bot}}, y_i^{\texttt{bot}}) = \bx_i^{\texttt{bot}} = (2 + \frac{2}{31}(i-1), -0.05)$ and $R$ is the ``ramp'' function defined by $R(t) = \max\{f t , 1\}$.
We enforce the no-slip boundary condition for the velocity field $\bU$ at the top and bottom of the channel, and the traction-free Neumann condition at the entrance and exit of the channel.
All variables are homogeneously initialized.
We report some of our findings in
% Figures  \ref{fig:H-field}, \ref{fig:backward-pumping} and \ref{fig:velocity-final-time}
Figures \ref{fig:H-field}, \ref{fig:backward-pumping} and \ref{fig:velocity-final-time}.
\begin{figure}
    \centering
    \includegraphics[width=0.9\linewidth]{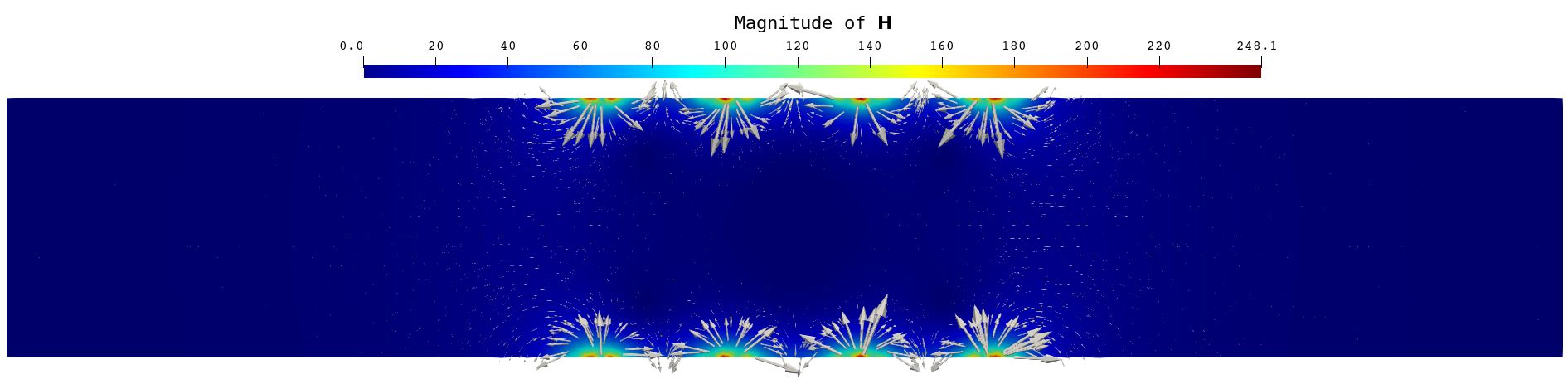}
    \caption{
    \emph{Magnetic H-field at time $t=0.1$}.
    The magnetic H-field $\bH$ (and the magnetization field $\bM$ as well) rapidly aligns with the applied magnetic field $\bh_a$.
    }
    \label{fig:H-field}
\end{figure}
\begin{figure}
    \centering
    \includegraphics[width=0.6\linewidth]{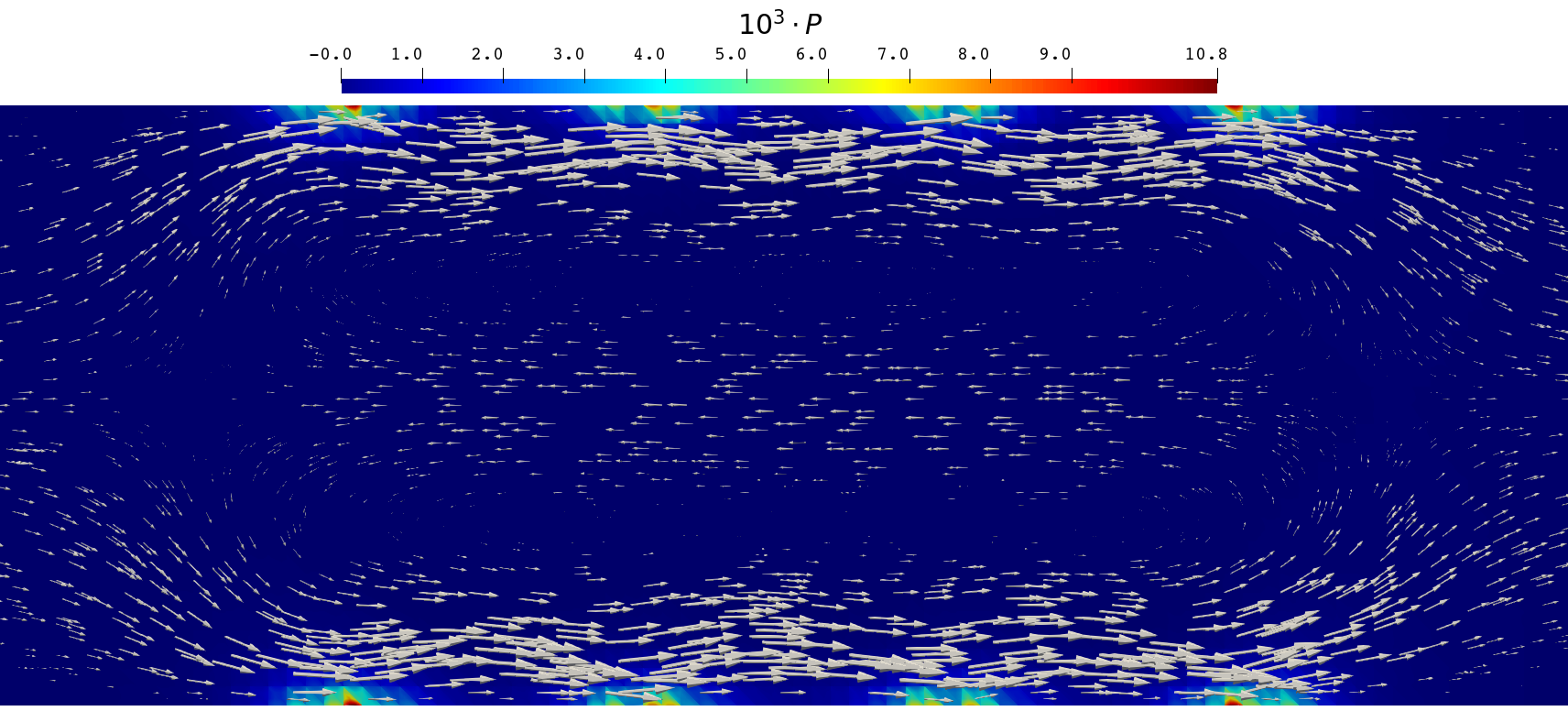}
    \caption{\emph{Backward pumping anomaly: velocity and pressure fields in the middle of the channel at time $t = 0.5$.}
    The pumping process induce a rather counterintuitive backward flow in the interjacent region between the top and bottom magnetic dipoles.
    This is a well-documented phenomenon \cite{RinaldiZahn2002}.
    }
    \label{fig:backward-pumping}
\end{figure}
\begin{figure}
    \centering
    \includegraphics[width=0.9\linewidth]{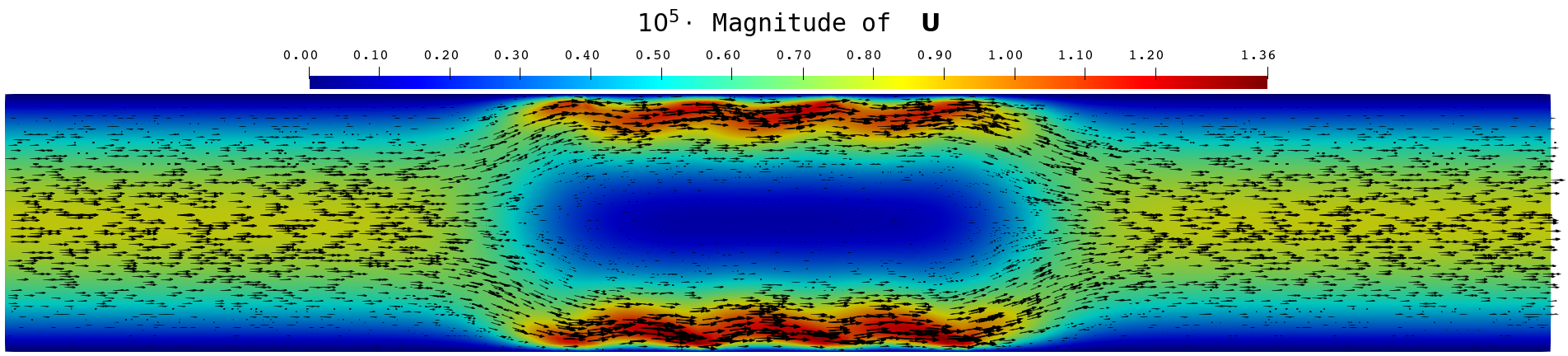}
    \caption{
    \emph{Velocity field at final time $T=3.65$}.
    The magnetic pulse succeeded at inducing linear momentum, and consequently a bulk flow, from left to right of the channel.
    The previously observed backward pumping has completely dissipated.
    }
    \label{fig:velocity-final-time}
\end{figure}

\begin{acknowledgement}
    We thank Franziska Weber (UC Berkeley) for helpful discussions regarding the ferrofluids system, and Johnny Guzm\'an (Brown University) for suggesting to use two scalar potentials.
\end{acknowledgement}
\ethics{Competing Interests}{All three authors were partially supported by the NSF Grant DMS-2512392. The authors have no conflicts of interest to declare that are relevant to the content of this work.}

% \section*{Appendix}
% \addcontentsline{toc}{section}{Appendix}
% %
% %
% When placed at the end of a chapter or contribution (as opposed to at the end of the book), the numbering of tables, figures, and equations in the appendix section continues on from that in the main text. Hence please \textit{do not} use the \verb|appendix| command when writing an appendix at the end of your chapter or contribution. If there is only one the appendix is designated ``Appendix'', or ``Appendix 1'', or ``Appendix 2'', etc. if there is more than one.

% \begin{equation}
% a \times b = c
% \end{equation}

\printbibliography

\end{document}